\titleformat{\section}{%
\normalfont\large\bfseries}{\thesection.}{1em}{}
\titleformat{\subsection}{%
\normalfont\normalsize\bfseries}{\thesubsection.}{1em}{}
\renewcommand{\phi}{\varphi}
\newcommand{\perm}{\mathfrak{S}}
\newcommand{\U}{\mathcal{U}}
\DeclareMathOperator{\Div}{div}
\DeclareMathOperator{\cof}{cof}
\newcommand{\w}{\wedge}
\newcommand{\I}{\mathcal{I}}
\newcommand{\step}[1]{\par\medskip\noindent\it#1\rm}
\newcommand{\dcc}{d_{\textup{cc}}}
\DeclareMathOperator{\espo}{e}
\newcommand{\eap}{\espo_{\textup{ap}}}
\newcommand{\expap}{\exp_{\textup{ap}}}
\newcommand{\abs}[1]{\lvert#1\rvert}
\newcommand{\g}{\gamma}
\newcommand{\wh}{\widehat}
\renewcommand{\H}{\mathcal{H}}
\newcommand{\s}{\sigma}
\renewcommand{\P}{\mathcal{P}}
\renewcommand{\O}{\mathcal{O}}
\renewcommand{\cal}[1]{\mathcal{#1}}
\newcommand{\wt}{\widetilde}
\newcommand{\ol}{\overline}
\newcommand{\A}{\mathcal{A}}
\renewcommand{\d}{\delta}
\newcommand{\Eucl}{\textup{Euc}}
\newcommand{\p}{\partial}
\newcommand{\R}{\mathbb{R}}
\newcommand{\N}{\mathbb{N}}
\newtheoremstyle{pluto}  {}{}
{\slshape}  {}{\bfseries}  {.} {1ex}    {}
\newtheorem{theorem}{Theorem}[section]
\newtheorem{proposition}[theorem]{Proposition}
\newtheorem{lemma}[theorem]{Lemma}
\theoremstyle{pluto}
\newtheorem{definition}[theorem]{Definition}
\newtheorem{remark}[theorem]{Remark}
\newtheorem{example}[theorem]{Example}
\renewcommand{\d}{\delta}
\renewcommand{\t}{\tau}
\renewcommand{\a}{\alpha}
\renewcommand{\b}{\beta}
\newcommand{\loc}{\textup{loc}}
\DeclareMathOperator{\Lip}{Lip}
\DeclareMathOperator{\Span}{span}
\DeclareMathOperator{\ad}{ad}
\newcommand{\W}{\mathcal W}
\newcommand{\norm}[1]{\left\Vert#1\right\Vert}
\numberwithin{equation}{section}
\let\oldbibliography\thebibliography
\renewcommand{\thebibliography}[1]{%
  \oldbibliography{#1}%
  \setlength{\itemsep}{0pt}%
}
\newenvironment{enumerate*}{\begin{enumerate}[noitemsep] }{\end{enumerate}}
\newenvironment{itemize*}{\begin{itemize}[noitemsep] }{\end{itemize}}
\newenvironment{description**}{\begin{description}[noitemsep] }{\end{description}}
\begin{document}

\title{Almost exponential maps and integrability results
\\
for a class of 
horizontally regular vector fields
\thanks{2010 Mathematics Subject Classification. Primary 53C17;
Secondary 53C12.
Key words and Phrases: Integrable distribution, Carnot--Carath\'eodory distance.}}
\author{Annamaria Montanari \and Daniele Morbidelli}

\date{}

\maketitle

\begin{abstract}
   We consider a family $\H:= 
 \{X_1, \dots, X_m\}$ of $C^1$ vector fields in
$\R^n$ and we discuss  the associated $\H$-orbits.
Namely, we assume that our vector fields belong to a \emph{horizontal regularity} class and
we require that   a suitable \emph{$s$-involutivity} assumption holds. Then
we show that any $\H$-orbit $\O $ is a  $C^1$ immersed
submanifolds and it is an integral submanifold of the distribution generated by the family of all
commutators up to length $s$. Our main tool
is a class of \emph{almost exponential maps} of  which we discuss carefully  some precise 
first order  expansions.
\end{abstract}

  \tableofcontents

\section{Introduction and main results}

In this paper we  discuss the  integrability of distributions defined by  families of vector fields under  a higher
order \emph{horizontal regularity} hypothesis and assuming an
involutivity condition of order $s\in \N$.
The central tool we exploit is given by a class of \emph{almost exponential
maps} which we will analyze in details assuming only low regularity on the coefficients of the 
vector fields.

To start the discussion, fix 
a family $\H= \{X_1, \dots, X_m\} $  of at least Lipschitz-continuous vector fields. 
For   any $x\in \R^n$ define the Sussmann's
 \emph{orbit}, or   \emph{leaf}
\begin{equation}\label{orbitale}
\cal{O}_{\cal{H}}^x  : = \{ e^{t_1X_{j_1}} \cdots e^{t_p X_{j_p}}x:
p\in\N,  J:=(j_1,
\dots, j_p)\in \{1,\dots,m\}^p, t\in \Omega_{J,x}\},
\end{equation}
where for fixed $x\in\R^n$ we denote by 
 $\Omega_{J, x}\subset\R^p$ the open neighborhood of the origin where the map 
 $t\mapsto e^{t_1X_{j_1}} 
\cdots e^{t_p X_{j_p}}x$ is well defined.
We equip  the leaf  $\cal{O}_\H^x$   with the topology 
$\t_d$ defined by the Franchi--Lanconelli distance $d$; see \eqref{dicocco}.

Our purpose is to describe a regularity class of order  $s\ge 2$  
and a $s$\emph{-involutivity}
assumption that ensure that each orbit $\O_\H$ is a integral manifold of the distribution generated by the family~$\P:=\P_s:= \{Y_1, \dots, Y_q\}$ of all nested commutators of length at most $  s $ constructed from the original family $\H$. 
To give coordinates on $\O$ we shall use the  following 
\emph{almost exponential maps}.  Fix $s\ge 2$ and 
denote by $\P$ the aforementioned family of commutators.
Assign  to each $Y_j$ the
\emph{length} $\ell_j\le s$, just    its order. Then, let 
\begin{equation}\label{almostino} 
 E_{I,x}(h):= \expap(h_1 Y_{i_1})\cdots \expap(h_p Y_{i_p})x,                                                    
\end{equation}
where  $I= (i_1, \dots, i_p)$ is a multiindex which fixes $p$ commutators $Y_{i_1},\dots, Y_{i_p}\in\P$, $h\in\R^p$ belongs to a neighborhood of the origin and $p\in\{1, \dots, n\}$ is suitable.
See \eqref{appsto} for the definition of  the \emph{approximate exponential}  $\expap$.
We shall use the maps in~\eqref{almostino}  to construct charts, developing a higher order, nonsmooth, quantitative extension of some ideas appearing in a paper by Lobry; see~\cite{Lobry70}; see Theorem~\ref{viola} and Remarks~\ref{noterella} and~\ref{lobrao} below.

Here is a description of our regularity class.
Let $\H= \{X_1, \dots, X_m\}$ and let $s\ge 2$.  Assume that   $X_j=:f_j\cdot\nabla\in C^1_\Eucl$  for all $j$ (here and hereafter $C^1_\Eucl$ refers to Euclidean regularity). Assume also that   
for each $p\le s$  and $j_1, \dots, j_p\in\{1,\dots,m\}$, 
all derivatives 
$X_{j_1}^\sharp \cdots X_{j_{p-1}}^\sharp f_{j_p}$ exist and are locally  Lipschitz-continuous 
functions with respect to   distance $d$ associated to the vector fields. 
Here, following \cite{MontanariMorbidelli11a}, we denote by $X^\sharp f$ the Lie derivative along the vector field $X$ of the scalar function $f$. 
Moreover we require 
that  for any   commutator $Y_j=: g_j\cdot\nabla\in\P$, 
all maps of the form $g_j\circ  E_{I,x}$ are continuous for all $p\in\{1,\dots,n\}$, $I= (i_1, \dots, i_p)$
and $x\in\R^n$. \footnote{This condition is widely ensured for instance as soon as we assume that $g_j$ is continuous in the Euclidean topology, or at least in   the Sussmann's  orbit topology defined on $\O$ by the family $\H$; see \cite{Sussmann}.}

Furthermore, we require the following  $s$-involutivity condition. For any $X_j\in\H$ and for any $Y_k\in\P$ with maximal length $\ell_k=s$,
at any $x\in\Omega$ where the derivative $X_j^\sharp g_k(x)$ exists  one can write for suitable $b^i= b^i(x)$
\begin{equation}\label{integreria}\begin{aligned}
(\ad_{X_j} Y_{k})_x:= & ( X_j^\sharp g_k(x) - Y_k f_j(x))\cdot\nabla
= \sum_{ i=1}^{q}
b^i Y_{i,x}
\quad\text{with $b^i$ locally bounded.}\end{aligned}
\end{equation}
The class of vector fields satisfying all those assumptions will be denoted by $\A_s$; see Definition \ref{laint}, where
a more precise formulation of this assumption is described.
Note that in the smooth case we have $\ad_{X_j} Y_{k} = [X_j, Y_k]$ and ultimately \eqref{integreria}
is equivalent to the Hermann condition \cite{Hermann}
\begin{equation}\label{ajello}
 [Y_i, Y_j] = \sum_{1\le k\le q} c_{ij}^k Y_k,\quad\text{with $c_{ij}^k\in L^\infty_\loc$,  }
\end{equation}
which ensures that any  Sussmann's orbit $\O_\cal{P}$ of the
family  of commutators $\cal{P}$
is a integral manifold of the distribution generated by~$\P$.
If  furthermore $s=1$, then $\P= \H$ and \eqref{ajello} and \eqref{integreria} are the same.
Note that the appearance of operators of the form  $\ad_{X_j}Y_k$ 
is very natural in the framework of our almost 
exponential maps; see  the non-commutative calculus 
formulas discussed in 
\cite[Section 3]{MontanariMorbidelli11a}.

Here is the statement of our result.

\begin{theorem}\label{grossouno}
 Let $\H= \{X_1, \dots, X_m\}$ be a family of vector fields of class $\A_s$.
Then, for any $x_0\in\R^n$, the orbit $\O:= \O^{x_0}_\H$ with the topology $\t_d$ is a $C^1$ immersed  submanifold of $\R^n$
with tangent space $T_y\O = P_y$ for all $y\in \O$.
\end{theorem}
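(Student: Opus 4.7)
The strategy is to build $C^1$ charts on $\O$ by means of the almost exponential maps themselves. Fix $y_0 \in \O$, set $n_0 := \dim P_{y_0}$, and choose a multiindex $I = (i_1, \dots, i_{n_0})$ so that $Y_{i_1, y_0}, \dots, Y_{i_{n_0}, y_0}$ is a basis of $P_{y_0}$. The first-order expansion of $E_{I, y_0}$ developed earlier in the paper (Theorem \ref{viola}) then ensures that $E_{I, y_0}$ is a $C^1$ immersion of a neighborhood $U$ of the origin of $\R^{n_0}$ into $\R^n$, whose differential at the origin has image exactly $P_{y_0}$. Therefore $M := E_{I, y_0}(U)$ is a $C^1$-embedded $n_0$-dimensional submanifold of $\R^n$ through $y_0$ with $T_{y_0} M = P_{y_0}$; and since each factor $\expap(h_k Y_{i_k})$ is by construction a composition of flows of the original fields $X_j$, one has $M \subset \O$.

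The crucial step is the reverse inclusion $\O \cap V \subset M$ for some $\t_d$-neighborhood $V$ of $y_0$. The $s$-involutivity \eqref{integreria} plays here the role of the Hermann condition: it implies that at each $y \in M$ the fields $X_j$ are tangent to $M$, and hence that each flow $e^{tX_j}$ issued from $y_0$, and by iteration every concatenation $e^{t_1 X_{j_1}} \cdots e^{t_p X_{j_p}} y_0$ with sufficiently small $|t|$, stays inside $M$. Consequently, any point of $\O$ that is $\t_d$-close to $y_0$ already belongs to $M$, and $\O$ coincides with $M$ in a neighborhood of $y_0$. The identification of the submanifold topology of $M$ with the restriction of $\t_d$ to $\O \cap V$ will be provided by ball-box type comparisons between $d(y_0, E_{I, y_0}(h))$ and an anisotropic norm of $h$, which are part of the quantitative almost exponential framework.

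Varying $y_0$ and verifying $C^1$-compatibility of the transition maps between two such charts---which follows from the continuous dependence of $E_{I, y}$ on both $h$ and $y$, together with the continuity of the compositions $g_j \circ E_{I, y}$ built into the class $\A_s$---yields the $C^1$ immersed submanifold structure on $\O$, and the identification $T_y \O = P_y$ at every $y$ is then immediate from the first-order expansion applied at $y$ in place of $y_0$. I expect the main obstacle to be the saturation step: because of the low regularity of the coefficients $b^i$ in \eqref{integreria}, showing that the flows of the $X_j$ stay in the candidate manifold $M$ cannot be done by classical ODE uniqueness or by pointwise commutator manipulations, and must instead be carried out through the quantitative estimates on the almost exponential maps produced earlier in the paper.
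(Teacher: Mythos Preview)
Your outline is essentially the paper's own proof (Theorem~\ref{frr}), but two points deserve correction. First, the expansion you need is Theorem~\ref{deduciamo}, not Theorem~\ref{viola}: the latter concerns how the pushforward $e^{-t\wt Z}_*$ preserves $P$, while it is \eqref{nhb} that gives $E_*(\p_{h_k})\in P_{E(h)}$ for \emph{every} $h$, hence $T_{E(h)}M=P_{E(h)}$ at all points of $M$, not just at $y_0$. This global tangency is what makes each $X_j$ tangent to $M$ throughout; you cannot read it off from the $s$-involutivity \eqref{integreria} alone. You should also record, before building charts, that $p_x=\dim P_x$ is constant along $\O$ (Proposition~\ref{valida}); otherwise charts at different points could have different dimensions.

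Second, the saturation step is not handled by further quantitative estimates on $E$, as you anticipate, but by Bony's invariance theorem \cite[Theorem~2.1]{Bony69}: once $M$ is a $C^1$ embedded submanifold and each $X_j$ is tangent to $M$, Bony's result guarantees that integral curves of $X_j$ starting in $M$ remain in $M$. This is exactly the low-regularity substitute for ODE uniqueness that you were looking for. With these two adjustments your sketch matches the paper's argument; the topology comparison is also lighter than a ball--box theorem: one inclusion is $d(E_{I,x}(h),x)\le C\|h\|_I$, and the other is the observation that each $E_{I,x}(O)$ contains a small $d$-ball, again via Bony.
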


 Note that this result does not follow from
standard ones, because the commutators $Y_j$ are not assumed to be $C^1$ in the
Euclidean sense. In Example \ref{esempietto} we exhibit a family of vector fields where our theorem apply, but classical results do not. See also Remark~\ref{finalremark} for some further  comments.
 Furthermore, let us mention that if 
$s=1$, i.e.~$\H=\P$, then Theorem \ref{grossouno} is a consequence of 
the  Frobenius Theorem for singular $C^1$ distributions 
(it is well known to experts that in such case   one can prove that
 orbits are even $C^2$ smooth). Note that if $s=1$,  in
 \cite{MontanariMorbidelli11b} we proved a singular Frobenius-type theorem   
assuming only Lipschitz-continuity of the involved vector fields, generalizing part of  Rampazzo's results \cite{Rampazzo} to singular distributions; in fact, in \cite{MontanariMorbidelli11b}, orbits are~$C^{1,1}$.

On a technical level, the main tool we discuss is the approximate exponential~$E_{I,x}$
in~\eqref{almostino}.  Introduce the notation
$
 p_x:= \dim P_x :=    \dim \Span\{Y_{1}(x),\dots, Y_{q}(x)\}$ 
for  all   $x\in\R^n$. Fix $x$, take $p:= p_x$  commutators $Y_{i_1}, \dots, Y_{i_p}$, which are  linearly
independent
at $x$
and construct the map $E$, defined in \eqref{almostino}.
Then, under the hypotheses of Theorem \ref{grossouno},   we shall show that if the family $\H$ satisfies condition $\A_s$, then   $E$ is a $C^1_\Eucl$,  full   rank map in a
neighborhood of the origin
$0\in
\R^p$, whose derivative enjoys the following remarkable expansion
\begin{equation}\label{polacco}
\begin{aligned}E_*(\p_{h_k}) =   Y_{i_k}(E(h))  +\sum_{\ell_j=\ell_{i_k}+1}^{s}
a^j_k(h)
 Y_{j  }(E(h))
+ \sum_{i=1}^q  \omega_k^i(x,h)  Y_{i}(E(h)).
\end{aligned}\end{equation}
The functions $a_k^j$ and $\omega_k^i$ have a very precise  rate of
convergence to
$0$, as $h\to 0$ which will be specified in \eqref{sogliola} and
\eqref{merluzzo}. Note that an expansion of $E_*(\p_{h_k})$ can be obtained either with 
the Campbell--Hausdorff formula in the smooth case  (see \cite{Morbidelli98} or \cite{VaropoulosSaloffCosteCoulhon}), or in nonsmooth situations with the techniques of \cite{MM}. However, the expansions in  the mentioned  papers contain 
some remainders   
appearing either   as   formal series, or in integral form. Here we are able to express such reminders via the pointwise terms 
$\omega_k^j$, improving  all previous  results.  Note also that we are improving    the
 mentioned papers both from a regularity standpoint and 
 because here we do not assume the H\"ormander 
condition. 
At the authors' knowledge, expansion \eqref{polacco} with  precise estimates on $a_k^j$ and $\omega_k^i$  is new even in the smooth case. As a final remark, observe that   Theorem \ref{deduciamo} contains an explicit  detailed    proof of the fact that the map $E$ is $C^1$ smooth, avoiding any use of the Campbell--Hausdorff formula. Note that, even if the vector fields are smooth, such maps are not much more than $C^1$; see Remark \ref{tredodici}-(ii).

The useful information one  can  extract from
\eqref{polacco} is that $
E_*(\p_{h_k}) \in P_{E(h)}$ (note that we are interested to situations where the  inclusion    $P_{E(h)}\subset\R^n$ is strict);
see Theorem~\ref{deduciamo} for a precise statement. Observe that, if
$O\subset\R^p$ is a small open set containing the origin, then $E(O)$
is a $C^1$ submanifold of $\R^n$ and    \eqref{polacco} shows that $
T_{E(h)}E(O) \subseteq P_{E(h)}$ for all $h$.  This is the starting point to prove that $\O_\H^x$ is a  integral manifold of the distribution generated by~$\P$.
Another fact we need to prove is that  
the dimension of  $P_y:= \Span\{Y_j(y): 1\le j\le q\}$ is  constant if $y$ belongs to a fixed orbit $ \O_\H^x$. This is obtained by means of a nonsmooth quantitative curvilinear  version of the original Hermann's argument inspired to the work of Nagel, Stein and Wainger~\cite{NagelSteinWainger} and Street~\cite{Street}.

To conclude this introduction, we 
give some references and motivations to study our almost exponential maps~$E$.
Such  maps   appear in \cite{NagelSteinWainger}, and were used by the authors to show   equivalence between different control distances; see also \cite{VaropoulosSaloffCosteCoulhon}.
 More recently they have revealed to be a useful tool to study Poincar\'e inequalities (see \cite{LM}),
subelliptic Sobolev spaces (see \cite{Danielli91,Morbidelli98,Coulhon01,MM}),   and 
    geometric  theory of   Carnot--Carath\'eodory spaces (see 
\cite{MontiMorbidelli02,FerrariFranchi03,Vittone10}). 
Finally, note that the precise  expansion \eqref{polacco} will be a fundamental  tool in the companion  paper \cite{MontanariMorbidelli11c}, where we shall prove a Poincar\'e inequality on orbits 
for a family of vector fields satisfying an integrability condition.

\section{Preliminaries}
\label{preliminarmente}

\paragraph{Vector fields    and
the control distance.}
Consider a
family    of vector fields $\H=\{ X_1, \dots, X_m\}$
 and assume that   $X_j\in C^1_\Eucl(\R^n)$ for all $j$. Here and later
$C^1_\Eucl$ means $C^1$ in the Euclidean sense.
Write $X_j = :f_j\cdot\nabla$, where $f_j\colon\R^n\to
\R^n$.
The vector field $X_j$, evaluated at a point $x\in \R^n$, will be denoted
by  $X_{j,x}$ or $ X_j(x)$.
All the vector fields in this paper are always defined on  the whole
space $\R^n$.

Define the Franchi--Lanconelli distance \cite{FL}
\begin{equation}\label{dicocco}
\begin{aligned}
 d(x,y)&:=
 \inf \Big\{r>0: \text{$y=e^{t_1 Z_1}\cdots e^{t_\mu Z_\mu} x$ for some $\mu\in
\N$}
 \\& \qquad\qquad \qquad
\text{where $\sum \abs{t_j}\le 1$
 with $Z_j\in  r\cal{H} $} \Big\}.
\end{aligned}
\end{equation}
Here  and hereafter we let $r\H:=  \{r X_1,\dots, r X_m\}$ and
$\pm r\H:=  \{\pm r X_1,\dots, \pm r X_m\}$. The topology associated with $d$ will be denoted with $\t_d$.
We denote instead by $\dcc$ the standard  \emph{Carnot--Carath\'eodory} or 
\emph{control} distance  (see Feffermann--Phong \cite{FeffermanPhong81} and Nagel--Stein--Wainger~\cite{NagelSteinWainger}).  In the present paper we shall make a prevalent use
of the distance~$d$.
It is well known  that  
$\t_d$ is (possibly strictly) stronger than
the topology $\t_\Eucl|_\O$ received by $\cal{O}$ from $\R^n$. See \cite[Chapter
3]{Berhanu} and
\cite[Example 5.5]{Agrachev}.

In view  of the mentioned examples, we need to use the  broad definition of submanifold; see \cite{Chevalley,KobayashiNomizu}.
Below, if $\Sigma\subset\R^n$, we denote by $\t_\Eucl|_\Sigma$ the induced topology.

\begin{definition}
 [Immersed submanifold] Let $\Sigma\subset\R^n$ and let $\t\supseteq\t_\Eucl|_\Sigma$ be a 
topology on
 $\Sigma$.
We say that $\Sigma$ is a $C^k$ submanifold if $\Sigma $ is connected and for all $x\in \Sigma$ there is  
 $\Omega\in\tau$, open neighborhood of $x$ such that $\Omega$ is a $C^k$ graph.
If moreover  $\t=\t_\Eucl|_\Sigma$ then we say that $\Sigma$ is an \emph{embedded} submanifold.
\end{definition}

\paragraph{Horizontal regularity classes.}
Here we define   our notion
of horizontal regularity in terms of the distance $d$. Note that we  \emph{do
not} use the control distance $\dcc$.
\begin{definition}\label{montgomery}
  Let    $\H :=\{  X_1, ,\dots,X_m \}$ be a family of
  vector fields, $X_j\in C^1_\Eucl$. Let $d$ be their   distance \eqref{dicocco}
Let $g:\R^n\to \R$. We say that $g$ is \emph{$d$-continuous}, and we write
$g\in
C^0_\H(\R^n)$, if for all $x\in \R^n$, we have $
g(y)\to  g(x)$, as
$d(y,x)\to 0$.
We say that $g:\R^n\to \R$    is \emph{$\H$-Lipschitz} or \emph{$d$-Lipschitz}
in $A\subset\R^n$ if
\begin{equation*}
 \Lip_\H(g; A):= \sup_{ {x,y\in A,\; x\neq y}} \frac{|g(x)-
g(y)|}{d(x,y)}<\infty.
\end{equation*}
We say  that  $g\in C_\H^1(\R^n)$ if the derivative
$X_j^\sharp g(x): = \lim_{t\to 0}(f(e^{tX_j}x) - f(x))/t$ is a $d$-continuous
function
for any $j=1, \dots, m$.
 We
say that $g\in C^k_{\cal{H}}(\R^n)$ if all the derivatives
$X_{j_1}^\sharp\cdots X_{j_p}^\sharp g$ are $d$-continuous for $p\le k$ and $j_1,
\dots,
j_p\in\{1,\dots,m\}$. If all the derivatives
$X_{j_1}^\sharp\cdots X_{j_k}^\sharp g$ are $d$-Lipschitz on each $\Omega$
bounded set in the
Euclidean metric, then we say that $g\in
C^{k,1}_{\cal{H},\loc}(\R^n)$.
Finally, denote the usual Euclidean Lipschitz constant of $g$  on $A\subset\R^n$
by $\Lip_{\Eucl}(g; A)$.
\end{definition}

 We will  usually  deal with vector fields which are of class at least
$C^1_\Eucl\cap C^{s-1,1}_{\H,\loc}$, where $s\ge 1$ is a suitable integer.
In this case it   turns out that  commutators up to the order $s$ can be
defined; see Definition \ref{ffff}.
In the companion paper \cite{MontanariMorbidelli11a} we study  several issues 
related with this definition.

\paragraph{Definitions   of commutator.}
Our  purpose  now  is to show that, given a family $\H$ of vector fields with
$X_j\in C^{s-1,1}_{\H,\loc} \cap C^1_\Eucl$, then commutators  can be   defined
up to
length $s$.

For any $\ell\in \N$,
denote by $\W_\ell:  = \{ w_1\cdots w_\ell: w_j\in \{1,\dots, m\}\}$ the words
of
length $\abs{w}:=\ell$ in the alphabet $1,2,\dots,m$. Let also $\perm_\ell $ be
the
group of  permutations of $\ell$ letters.
Then for all $\ell\ge 1$, there are functions $\pi_\ell:\perm_\ell\to \{-1,0,1\}$ such that
\begin{equation}
\label{commototo}
[A_{w_1},[A_{w_2},\dots
[A_{w_{\ell-1}},A_{w_\ell}]]\dots ]=  \sum_{\s\in
\perm_\ell}\pi_\ell(\s) A_{\s_1(w)} A_{\s_2(w)}\cdots A_{\s_\ell(w)},
\end{equation}
for all $A_1, \dots,
A_m:V\to V$  linear operators on a  vector space $V$.
See \cite{MontanariMorbidelli11a}   for a more formal definition and an in-depth discussion.

We are now ready to define commutators for vector fields in our regularity
classes.
\begin{definition}
[Definitions of commutator]\label{ffff} 
 Given a family  $\H=\{X_1,\dots,  X_m\}$ of vector fields  of class
$C^{s-1,1}_{\H,\loc}\cap C^1_\Eucl $,   define for any function $\psi\in C^1_\H$ the operator \(X_j^\sharp\psi(x) :=
\cal{L}_{X_j}\psi(x),\)  the Lie derivative. Let also  $
X_j\psi(x) := f_j(x) \cdot \nabla \psi(x)$ where $\psi\in C^1_\Eucl$.
Moreover, let
\begin{equation*}
\begin{aligned}
f_w &: =  \sum_{\s\in \perm_\ell } {\pi}_\ell(\s)\big( X_{\s_1(w)}\cdots
X_{\s_{\ell -1}(w)}
f_{\s_\ell(w)} \big)\quad\text{for all $w$ with $\abs{w}\le s$,}
 \\
X_w \psi  & :=[X_{w_1}, , \dots, [X_{w_{\ell-1}}, X_{w_\ell}]] \psi
: = f_w \cdot\nabla \psi\quad\text{for all $\psi\in C^1_\Eucl\quad \abs{w}\le
s$,}
\\
 X_w^\sharp      \psi&
: = \sum_{\s\in \perm_\ell } {\pi}_\ell(\s)
X_{\s_1(w)}^\sharp\cdots X_{\s_{\ell-1}(w)}^\sharp
X_{\s_\ell(w)}^\sharp\psi \quad\text{for all
$\psi\in C^{\ell}_{\cal{H}}$\quad $\abs{w}\le s-1$.
}
\end{aligned}
\end{equation*}
Finally,   for any $j\in\{1,\dots, m\}$ and
$w$ with $1\le \abs{w}\le s$, let
\begin{equation}\label{addio}
 \ad_{X_j} X_w \psi : = (X_j^\sharp  f_w - f_w\cdot\nabla f_j )\cdot\nabla \psi
=(X_j^\sharp f_w - X_w f_j) \cdot\nabla\psi \quad\text{for all  $\psi\in
C^1_\Eucl$.}
\end{equation}
\end{definition}
Non-nested commutators are precisely defined in \cite{MontanariMorbidelli11a}.

\begin{remark} \begin{itemize*}\item Let $Z\in \pm \H $.  If $\abs{w}\le s-1$, then there are no
problems in defining $\ad_Z X_w$. More precisely, in    \cite{MontanariMorbidelli11a} we
show that  $\ad_Z X_w =[ Z, X_w]$.   If instead
$\abs{w}=s$, then
 the function
$t\mapsto
f_w(e^{t Z}x)$ is Euclidean Lipschitz. In particular it is differentiable for
a.e.~$t$. In other words, for any fixed $x\in \R^n$, the limit
$
 \frac{d}{dt} f_w(e^{t Z}x) = : Z^\sharp f_w(e^{tZ}x )$
exists for a.e.~$t$ close to $0$.  Therefore the pointwise derivative $Z^\sharp
f_w(y)$ exists for almost
all
$y\in \R^n$ and ultimately $\ad_Z X_w$ is defined almost everywhere.

\item  Both our definitions of commutator, $X_w$ and
$X_w^\sharp$
are well
posed from an
algebraic point of view, i.e.~they satisfy  antisymmetry and the Jacobi identity; see \cite{MontanariMorbidelli11a}.

\item In \cite{MontanariMorbidelli11a} we will also  recognize that the first order operator $X_w$ agrees with
$X_w^\sharp$ against functions   $\psi\in C^{s-1,1}_{\cal{H},\loc} \cap
C^1_\Eucl$
as soon as  $|w|\le s-1$. 

\end{itemize*}
\end{remark}

\paragraph{The integrability class $\A_s$.}
\begin{definition}[Vector fields of class $\mathcal{A}_s$]\label{laint}
 Let $\H= \{X_1, \dots, X_m\} $ be a family in the regularity class
 $C^1_\Eucl\cap
C^{s-1,1}_{\H,\loc} $.  We say that the family $\cal{H}$ belongs to the class $\A_s$
if, fixed an open  bounded set $\Omega\subset \R^n$,
there is $C_0>1$ such that the following holds: for
any $Z\in \pm\H$,   for any
word $w$ with $\abs{w}=s$, for each $x\in
\Omega$ and for a.e.~$t\in [-C_0^{-1}, C_0^{-1}]$,
there are
coefficients $b^v\in \R$ such that
\begin{align}\label{arte}
 \ad_{Z} X_w (e^{tZ}x) & = \sum_{1\le \abs u\le s}b^u X_{u}(e^{tZ}x)\quad\text{with}
\\ \label{artefatto}  \abs{b^u } & \le C_0 \qquad\text{for all $u$ with $1\le
\abs{u}\le s$; }
\end{align}
finally assume that if $1\le \abs{w}\le s$,  for all $p\in\{1,\dots,n\}$, for any $I\in\I(p,q)$, $x\in \R^n$,
we have at any $h^*$ where $E_{I,x}$ is defined
\begin{equation}\label{sussmann}
  f_w(E_{I,x}(h))  \longrightarrow
f_w(E_{I,x}(h^*))  \quad\text{as
$h\to h^*$.}
\end{equation}
\end{definition}

\begin{remark}
\begin{itemize*}

\item Assumption \eqref{sussmann} will be used only once, in
 \eqref{sosso}, but
it
is essential in order to ensure that the  almost exponential maps we define
later are actually $C^1_\Eucl$ smooth.
It is easy to check that  assumption  \eqref{sussmann} is  satisfied as soon
as $f_w: (\O_\H,\t_\H)\to \R$
is continuous, where $\t_\H$ denotes the Sussmann's  orbit topology defined by the
family $\H$, see \cite{Sussmann}. 
  Note that at this stage assumption \eqref{sussmann} is not ensured by the $d$-Lipschitz continuity of $f_w$.

\item
Conditions \eqref{arte} and \eqref{artefatto} scale nicely. Namely,
letting 
for all $r\le
1$, $\wt Z= r Z$, $\wt X_w  = r^{\abs{w}}X_w$ with $\abs{w}=s$, we have
\begin{equation}\begin{aligned}\label{artusi} \ad_{\wt Z} \wt X_w (x) = \sum_{1\le \abs u\le
s} \wt b^u
\wt X_{u}(x) \quad\text{ where
  $\abs{\wt b^u} \le C_0 r\le C_0$ for all $u$.}
\end{aligned}\end{equation}
\item 
Let      $\H $ be a family of vector fields in the class $C^1_\Eucl\cap C^{s-1,1}_{\H,\loc}$ satisfying the  H\"ormander 
bracket-generating condition  of step $s$ and  assume that each  $f_w $  with $\abs{w}\le s$ is continuous in the Euclidean sense.
Then 
$\H$ satisfies $\A_s$. The constant $C_0$
in \eqref{artefatto} depends also on a positive lower bound on  $\inf_\Omega\abs{\Lambda_n(x,1)}$, see \eqref{panacea}. 
 This case is discussed in   \cite[Section~4]{MontanariMorbidelli11a}. 

\item
The pathological  vector fields $X_1= \p_{x_1} $ and $X_2=
e^{-1/{x_1}^2}\p_{x_2},
$ in spite of their $C^\infty$ smoothness, do not satisfy  \eqref{artefatto} for any  $s\in \N$.
\end{itemize*}
\end{remark}

Let $\Omega_0\subset\R^n$ be a fixed open set, bounded in the Euclidean metric.
Given a family   $\H$  of vector fields of
 class $C^1_\Eucl\cap C^{s-1,1}_{\H,\loc}$,  introduce the
constant
\begin{equation}
\label{lipo}
\begin{aligned}
L_0  :&  = \sum_{j_1,\dots, j_s=1}^m\Big\{ \sup_{\Omega_0 }
\Big(  |f_{j_1}| +
|\nabla
f_{j_1}|
+
  \sum_{p\le s} |X_{j_1}^\sharp \cdots
 X_{j_{p-1}}^\sharp f_{j_p}|\Big)
\\  & \qquad
\qquad \qquad +\Lip_\H(X_{j_1}^\sharp\cdots
X_{j_{s-1}}^\sharp f_{j_s};   \Omega_0)\Big\}.
  \end{aligned}
       \end{equation}
We shall always choose points  $x\in\Omega\Subset\Omega_0$ and we fix a
constant
$t_0>0$ small enough
to ensure that
\begin{equation}\label{hello}
e^{\t_1 Z_1}\cdots e^{\t_N Z_N}x\in \Omega_0
\quad \text{if $x\in \Omega$,
$Z_j\in \H$, $\abs{\t_j}\le t_0$ and $N\le N_0  $,}
\end{equation}
 where
$N_0$ is a suitable  constant which depends  on the    data  $n, m$ and
$s$.

\begin{proposition}[measurability] \label{misod} Let $\H$ be a family  of
class $\A_s$. Let $\abs{w} =s$ and let
$Z\in \pm\H$,
 Then for any $x\in \Omega$ we can write
 \begin{equation}
 \ad_{Z} X_w(e^{t Z}x) = \sum_{1\le\abs{v}\le s} b^v(t)
X_v(e^{t  Z}x) \qquad \text{for a.e. $t\in (-t_0, t_0)$,}
\end{equation}
where the functions $t\mapsto b^v(t)$ are \emph{measurable} and for a.e.~$t$ we have $\abs{b^v(t)}\le
C_0$, where $C_0$ denotes  the constant in \eqref{artefatto}.
\end{proposition}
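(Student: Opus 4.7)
The plan is to combine the pointwise solvability granted by condition $\A_s$ with a measurable selection argument. Fix $x \in \Omega$ and $Z \in \pm\H$, write $Z = f_Z \cdot \nabla$ and $\gamma(t) := e^{tZ}x$. Enumerate the words $v$ with $1 \le |v| \le s$ as $v_1, \dots, v_N$, set $A_i(t) := f_{v_i}(\gamma(t))$, let $A(t) \in \R^{n \times N}$ be the matrix with columns $A_1(t), \dots, A_N(t)$, and let $b(t)$ be the coefficient vector of $\ad_Z X_w$ evaluated at $\gamma(t)$, which by \eqref{addio} reads $b(t) = Z^\sharp f_w(\gamma(t)) - X_w f_Z(\gamma(t))$.

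First I would verify that the entries of $A(t)$ and $b(t)$ are Lebesgue measurable on $(-t_0, t_0)$. The curve $\gamma$ is $C^1_\Eucl$ and moves at unit $d$-speed, in the sense that $d(\gamma(t), \gamma(t')) \le |t - t'|$; combined with the $d$-continuity of each $f_{v_i}$ (which holds because $|v_i| \le s$ and $\H \subset C^{s-1,1}_{\H,\loc}$) this yields continuity of $A_i$ in $t$. Inside $b(t)$, the term $X_w f_Z \circ \gamma$ is continuous, since $\nabla f_Z$ is continuous (as $Z \in C^1_\Eucl$) and $f_w$ is $d$-continuous, while $Z^\sharp f_w(\gamma(t))$ agrees at a.e.\ $t$ with the classical derivative $\frac{d}{dt} f_w(\gamma(t))$---the content of the remark after Definition \ref{ffff}, which rests on the $d$-Lipschitz regularity of $f_w$---so it is Lebesgue measurable.

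Now consider the multifunction $\Phi(t) := \{\xi \in [-C_0, C_0]^N : A(t)\xi = b(t)\}$, whose values are compact and convex and whose graph, being the zero set of the Carath\'eodory map $(t, \xi) \mapsto A(t)\xi - b(t)$, is Lebesgue measurable in $(-t_0, t_0) \times [-C_0, C_0]^N$. By \eqref{arte}--\eqref{artefatto}, $\Phi(t) \ne \emptyset$ for a.e.\ $t$. A standard measurable selection theorem (Kuratowski--Ryll-Nardzewski, or concretely the projection of the origin onto $\Phi(t)$, which is its unique minimum-norm element and depends continuously on the pair $(A(t), b(t))$ on each stratum where $\Phi$ is non-empty) then furnishes a measurable selector $t \mapsto \xi(t) \in \Phi(t)$. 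Setting $b^v(t)$ equal to the $v$-th coordinate of $\xi(t)$, and extending by $0$ on the null set where $\Phi$ is empty, produces the measurable coefficients bounded by $C_0$ required by the statement. The only delicate point is really the measurability of $Z^\sharp f_w \circ \gamma$; once that is in place, everything else is a finite-dimensional linear-algebra selection.
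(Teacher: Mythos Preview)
Your argument is correct: once the data $A(t)$ and $b(t)$ are shown to be Lebesgue measurable, the existence of a bounded measurable selector from the set-valued map $\Phi(t)=\{\xi\in[-C_0,C_0]^N:A(t)\xi=b(t)\}$ is exactly a Kuratowski--Ryll-Nardzewski (or Filippov-type) application, and this gives the proposition. The paper does not spell out a proof here---it simply points to \cite[Proposition~4.1]{MontanariMorbidelli11a}---so no line-by-line comparison is possible, but the argument there is of the same flavour (measurability of the data along the flow, then a selection).

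Two small remarks. First, the justification you give for the continuity of $t\mapsto f_{v_i}(\gamma(t))$ via ``$d$-continuity of $f_{v_i}$'' is slightly off as stated: $d$-continuity of $f_w$ for $|w|=s$ is not asserted directly in the paper (indeed \eqref{sussmann} is imposed separately). What the paper does give you, in the remark after Definition~\ref{ffff}, is precisely that $t\mapsto f_w(e^{tZ}x)$ is Lipschitz for $|w|=s$ (and $C^1$ for $|w|\le s-1$, as used in Lemma~\ref{lili}); that is enough for your purposes. Second, the parenthetical ``minimum-norm element'' route is workable but not quite as clean as you suggest, since continuity of that selector can fail across rank changes of $A(t)$; the abstract KRN theorem you invoke first is the safer choice and already suffices.
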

\begin{proof}  The statement can be proved arguing as in   
\cite[Proposition~4.1]{MontanariMorbidelli11a}.
\end{proof}


\paragraph{Wedge products and $\eta$-maximality conditions.}
Following \cite{Street}, denote by
$\cal{P}   := \{ Y_1, \dots, Y_q\} = \{ X_w: 1\le \abs{w} \le s\}$
the family of commutators of length at most $s$.  Let
$  \ell_j\le s$  be the length of $Y_j$ and
write $Y_j=:g_j\cdot\nabla$.
Define for any $p,\mu\in \N$, with $1\le p\le \mu$,
$\I(p,\mu)  := \{I=(i_1, \dots, i_p): 1\le i_1<i_2< \cdots<i_p\le \mu\}  $.
  For each $x\in\R^n$ define
 $ p_x:= \dim  \Span   \{ Y_{j,x} : 1\le j\le q\}.$
Obviousely, $p_x\le \min\{n,q\}$. Then for any $p\in \{1,\dots,\min\{n,q\}\}$,
let
\begin{equation*}
 Y_{I,x} : = Y_{i_1,x}\wedge\cdots\wedge Y_{i_p,x}\in
{\textstyle\bigwedge}_p
T_x\R^n\sim {\textstyle\bigwedge}_p\R^n \quad\text{for all $I\in \I(p,q)$,}
\end{equation*}
and, for all $K\in \I(p,n)$ and $ I\in
\I(p,q)$
\begin{equation}\label{girocollo} 
\begin{aligned}
 Y_I^K(x)& : = dx^K(Y_{i_1}, \dots, Y_{i_p}) (x)
: = \det(g_{i_\a}^{k_\b})_{\a,\b=1,\dots, p}.
  \end{aligned}
\end{equation}
Here we let $dx^K:=dx^{k_1}\wedge \cdots \wedge d x^{k_p}$  for any
$K=(k_1,\dots, k_p)\in
\I(p,n)$.

The family $e_K:= e_{k_1}\wedge\cdots\wedge e_{k_p}$, where  $K\in\I(p,n)$,
gives an othonormal basis of $\bigwedge_p\R^n$, i.e. $\langle e_K, e_H\rangle
= \delta_{K,H}$ for all $K,H$. Then we  have the orthogonal decomposition
$
 Y_I(x)  =\sum_{K}Y_J^K(x) e_K\in {\bigwedge}_p \R^n
$, so that the number
\[ |Y_I(x)| : =\bigl(\sum_{K\in \I(p, n)}Y_I^K(x)^2\bigr)^{1/2}  =
\abs{Y_{i_1}(x)\wedge\cdots\wedge Y_{i_p}(x)}
\]
gives the  $p$-dimensional volume  of the parallelepiped  generated by
$Y_{i_1}(x), \dots, Y_{i_p}(x)$.

Let  $I= (i_1, \dots, i_p)\in \I(p, q)$ such that $\abs{Y_I}\neq 0$. Consider
the linear system $\sum_{k=1}^p\xi^k
Y_{i_k}= W$, for some
$W\in\Span\{Y_{i_1}, \dots, Y_{i_p}\}$. The Cramer's
rule  gives the unique solution
\begin{equation}
\label{cromo} \xi^k = \frac{\langle Y_I, \iota^k(W)
Y_I\rangle}{\abs{Y_I}^2}\quad\text{for each $k=1,\dots, p$,}
\end{equation}
where we let $\iota^k_W Y_I:=
\iota^k(W)
Y_I:= Y_{(i_1,\dots, i_{k-1})}\wedge
 W\wedge Y_{(i_{k+1},\dots,i_p)}.$

Let    $r>0$.  Given $J\in \I(p,q)$, let $\ell(J):=
\ell_{j_1}+ \cdots + \ell_{j_p}$. Introduce the vector-valued function
\begin{equation}\label{panacea} 
\begin{aligned}
\Lambda_p(x,r)& := \bigl(  Y_J^K (x)r^{\ell(J)} \bigr)_{{J\in \I(p, q)}, K\in
\I(p, n)}
=: \bigl( \wt Y_J^K (x) \bigr)_{{J\in \I(p, q),\,  K\in\I(p, n)}},
\end{aligned}
\end{equation}
where we adopt the tilde notation $\wt Y_k  = r^{\ell_k}Y_k $ and its obvious
generalization for wedge products. Note that $\abs{\Lambda_p(x,r)}^2 =
\sum_{I\in \I(p, q)}
r^{2\ell(I)}\abs{Y_I(x)}^2$.

\begin{definition}
 [$\eta$-maximality] Let $x\in\R^n$,  let $I\in \I(p_x,q)$ and $\eta\in (0,1)$.
We say
that $(I,x,r)$ is
$\eta$-maximal if
$\abs{Y_I (x)} r^{\ell (I)} >\eta \displaystyle \max_{J\in \I(p_x, q)} \abs{Y_J(x)}r^{\ell(J)}.
$
\end{definition}
Note that, if $(I,x,r)$ is a candidate to be  $\eta$-maximal with
 $I\in \I(p,q) $, then by definition it \emph{must} be    
$p=p_x=\dim\Span\{Y_j(x):1\le j\le q\}$.


\paragraph{Approximate exponentials of commutators.} \label{capuozzo}
Let $w_1, \dots, w_\ell\in \{1,\dots,m\}$.
Given $\t>0$, we define, as in \cite{NagelSteinWainger,Morbidelli98} and
\cite{MM},
\begin{equation}\label{navetta}
 \begin{aligned}
 C_\t( X_{w_1})& := \exp(\t X_{w_1}),
 \\ C_\t( X_{w_1}, X_{w_2})& :=\exp(-\t X_{w_2})\exp(-\t X_{w_1})\exp(\t
X_{w_2})\exp(\t X_{w_1}),
 \\&\vdots
  \\C_\t( X_{w_1}, \dots, X_{w_\ell})&
:=C_\t( X_{w_2}, \dots, X_{w_\ell})^{-1}\exp(-\t X_{w_1}) C_\t( X_{w_2}, \dots,
X_{w_\ell})\exp(\t X_{w_1}). \end{aligned}
 \end{equation}
Then let
\begin{equation}\label{appsto} 
\eap^{tX_{w_1 w_2\dots w_\ell}} :=  \expap(t X_{{w_1 w_2\dots w_\ell}}):=
\left\{\begin{aligned}
& C_{t^{1/\ell}}(X_{w_1}, \dots, X_{w_\ell} ), \quad &\text{ if $t\geq 0$,}
\\
&C_{|t|^{1/\ell}}(X_{w_1}, \dots, X_{w_\ell} )^{-1}, \quad &\text{ if $t<0$.}
                 \end{aligned}\right.
\end{equation}
By standard ODE theory,    there is $t_0$  depending on $\ell,  \Omega$,
$\Omega_0$,  $ \sup\abs{ f_j } $ and  $\sup\abs{\nabla f_j}  $     such that
$\exp_*(t X_{{w_1 w_2\dots w_\ell}})x\in\Omega_0$   for any $x\in
 \Omega$ and $|t|\le t_0$.
   Define, given $I= (i_1,\dots,i_p)\in\{1,\dots,q\}^p $,   $x\in \Omega$ and
$h\in\R^p$, with
$|h|\le C^{-1}$
     \begin{equation}
\begin{aligned} \label{hhh}E_{I,x}(h)& :=\expap(h_1 Y_{i_1})\cdots
\expap(h_p
Y_{i_p})(x)
\\
\bigl\|h\bigr\|_I & : =\max_{j=1,\dots,p}|h_j|^{1/\ell_{i_j} }\quad
\text{and}\quad   Q_I(r):
=\{h\in\R^p:\norm{h}_I < r\}.
\end{aligned}\end{equation}

\paragraph{Gronwall's inequality.} We shall refer several times to the
following standard fact: for all $a\ge 0$, $b>0$,
$T>0$ and $f$ continuous on $[0,T]$,
\begin{equation}\label{grammatica}
 0\le f(t)\le a t+ b\int_0^t f(\t) d\t \quad\forall \;
t\in [0, T]\quad\Rightarrow
\quad f(t)\le \frac{a}{b} (e^{b t}-1)\quad\forall\, t\in[0,T].
\end{equation}


\section{Approximate exponentials and regularity of \texorpdfstring{$A_s$}{As} 
orbits} \label{kobayashi}
Let $\H = \{X_1, \dots, X_m\}$ be a family of $\A_s$ vector fields in $\R^n$.
The main  purpose of this section is to prove that any $\H$-orbit $\cal{O}_\H$
with the topology $\t_d$ generated by the distance~$d$ is a $C^1$ integral manifold of  the distribution generated by $\P$.
Recall our  usual notation
$ \cal{P} : = \{ Y_j: 1\le j\le q\}$,
$ P_x
:=\Span\{Y_{j,x}: 1\le j\le q\}$ and $p_x:= \dim P_x.$

\subsection{Geometric properties of orbits}\label{fiveone}
In this subsection   we  look at the properties of orbits $\O_{\H}$ for vector
fields of class $\A_s$.
First we study how the geometric determinants $\wt Y_J^K$ change along a given
orbit $\O_\H$. The argument  we use   is known, see for
instance
\cite{TaoWright03,MM} and
especially  \cite{Street}. However,  we need to address
some issues which appear due to our  low regularity assumptions.
Ultimately, we will
show that the positive integer
   $p_x$ is
constant as   $x\in\O_\H$.

Below we shall  use the  following notation:  given $r>0$, we
let $\wt Y_j =
r^{\ell_j} Y_j = :
\wt g_j\cdot\nabla$ and $\wt Z = r Z$, if $Z\in \pm \cal{H}$. Let also $\wt Y_J^K:=r^{\ell(J)}Y_J^K$,
 where the notation
for
$Y_J^K$ has been introduced in   \eqref{girocollo}.
\begin{lemma}  \label{lili} Let $\H$ be a family of vector fields of class
$\A_s$. Let  $ p\in\{1,\dots, q \wedge n\}$. Let $x\in
\Omega$ and $r_0>0$ so that $B_d(x,r_0)\subset\Omega_0$.
 Let  $J\in \I(p , q)$, $K\in \I(p, n)$,  $r\in (0, r_0]$ and   $\wt
Z\in \pm
r\H$. Then the
function
$[-1,1]\ni t\mapsto \wt Y_J^K(e^{t \wt Z}x)$ is
  Lipschitz continuous
 and 
there is  $C>1$ depending on $C_0$ and $L_0$ in \eqref{artefatto}
and \eqref{lipo}
such that
\begin{equation*}
 \Bigl|\frac{d}{dt}\wt Y_J^K (e^{t \wt Z}x) \Bigr|\le
C\abs{\Lambda_p(e^{t \wt Z}x,
r)} \quad\text{ for
a.e. $t\in (-1, 1)$}.
\end{equation*}
\end{lemma}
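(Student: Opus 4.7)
The plan is to express the time derivative of $\wt Y_J^K(e^{t\wt Z}x)$ using the multilinearity of the determinant and then to replace each term by a bounded linear combination of other geometric determinants $\wt Y_{J'}^{K'}$, each of which is controlled by $\abs{\Lambda_p}$.

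First I would check the regularity in $t$.  Since each scalar coefficient $\wt g_{i_\alpha}^{k_\beta}$ is a commutator coefficient of length $\ell_{i_\alpha}\le s$, the horizontal regularity class $C^{s-1,1}_{\H,\loc}\cap C^1_\Eucl$ together with Euclidean boundedness of the flow $t\mapsto e^{t\wt Z}x$ ensures that each $t\mapsto \wt g_{i_\alpha}^{k_\beta}(e^{t\wt Z}x)$ is Lipschitz on $[-1,1]$: if $\ell_{i_\alpha}\le s-1$ this is immediate from $C^1_\Eucl$; if $\ell_{i_\alpha}=s$ it follows from the $d$-Lipschitz continuity of $\wt g_{i_\alpha}^{k_\beta}$ and the fact that $t\mapsto e^{t\wt Z}x$ is $d$-Lipschitz (with Lipschitz constant of order $r$). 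Hence the determinant $t\mapsto\wt Y_J^K(e^{t\wt Z}x)$ is Lipschitz and a.e.~differentiable.

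At a differentiability point $t$, write $y_t:=e^{t\wt Z}x$ and expand by multilinearity on columns:
\begin{equation*}
\frac{d}{dt}\wt Y_J^K(y_t)=\sum_{\alpha=1}^{p}\det\nolimits_K\bigl(\wt g_{i_1}(y_t),\dots,\wt Z^\sharp \wt g_{i_\alpha}(y_t),\dots,\wt g_{i_p}(y_t)\bigr).
\end{equation*}
I would then use the algebraic identity $\wt Z^\sharp \wt g_{i_\alpha}=\mathrm{coeff}(\ad_{\wt Z}\wt Y_{i_\alpha})+\wt Y_{i_\alpha}^\sharp \wt f$, where $\wt f$ is the coefficient vector of $\wt Z$, to split the sum into a ``commutator part'' and a ``Jacobian part.''

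For the commutator part I would distinguish two cases. If $\ell_{i_\alpha}<s$, then $[\wt Z,\wt Y_{i_\alpha}]$ is again (up to sign) one of the $\wt Y_v$'s with $\ell_v=\ell_{i_\alpha}+1\le s$, with scaled coefficient of modulus~$1$. If $\ell_{i_\alpha}=s$, then Proposition~\ref{misod} together with the scaling \eqref{artusi} yields $\ad_{\wt Z}\wt Y_{i_\alpha}=\sum_v \wt b^v \wt Y_v$ with $\abs{\wt b^v}\le C_0$ for a.e.~$t$.  In either case, substituting and using multilinearity turns each term $\det_K(\dots,\mathrm{coeff}(\ad_{\wt Z}\wt Y_{i_\alpha}),\dots)$ into a combination $\sum_v c^v\,(\pm\wt Y_{J(\alpha,v)}^K)$ in which $J(\alpha,v)$ is $J$ with $i_\alpha$ replaced by $v$, the coefficients $c^v$ are bounded by $C_0$, and $\abs{\wt Y_{J(\alpha,v)}^K}\le \abs{\Lambda_p(y_t,r)}$ by definition of $\Lambda_p$.

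For the Jacobian part I would rewrite each term as $\det_K(\wt g_{i_1},\dots,(\nabla \wt f)\,\wt g_{i_\alpha},\dots,\wt g_{i_p})$.  Summing over $\alpha$ equals $\frac{d}{d\e}\bigr|_{\e=0}\det_K((I+\e\nabla\wt f)G)$, where $G$ is the $n\times p$ matrix with columns $\wt g_{i_\beta}$. By Cauchy--Binet, this derivative expands as a sum over $K'\in\I(p,n)$ of products $\alpha_{K,K'}\,\wt Y_J^{K'}$, where the coefficients $\alpha_{K,K'}$ are bounded by entries of $\nabla\wt f=r\nabla f$, hence by $r L_0\le L_0$. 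Each $\abs{\wt Y_J^{K'}}\le\abs{\Lambda_p(y_t,r)}$, so this sum is also controlled by $C\abs{\Lambda_p(y_t,r)}$.

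Adding the two contributions yields the claimed inequality, with $C$ depending only on $L_0$ and $C_0$.  The main obstacle is bookkeeping in the low-regularity setting: one must justify the pointwise a.e.\ computation by exploiting Lipschitz continuity along horizontal flows (with the measurability of the coefficients provided by Proposition~\ref{misod}), and one must handle separately the intermediate-length case $\ell_{i_\alpha}<s$, where $[\wt Z,\wt Y_{i_\alpha}]$ is a genuine element of $\wt\P$, from the top-length case, where only the $\A_s$-assumption is available.
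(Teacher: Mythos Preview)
Your argument is correct and matches the paper's proof: expand the derivative of the determinant by multilinearity, split $\wt Z^\sharp \wt g_{i_\alpha}$ into a commutator piece (handled separately according to whether $\ell_{i_\alpha}<s$ or $\ell_{i_\alpha}=s$, the latter via Proposition~\ref{misod}) and a Jacobian piece $\wt Y_{i_\alpha}\wt f$, and bound each resulting term by $C\abs{\Lambda_p}$; your Cauchy--Binet computation for the Jacobian piece is precisely the content of the paper's Lemma~\ref{ollo}. One small correction: for $2\le\ell_{i_\alpha}\le s-1$ the coefficient $g_{i_\alpha}$ is not $C^1_\Eucl$ in general, so Lipschitz continuity of $t\mapsto g_{i_\alpha}(e^{t\wt Z}x)$ should be justified via the horizontal regularity (boundedness of $\wt Z^\sharp g_{i_\alpha}$, which is controlled by $L_0$) rather than via Euclidean $C^1$.
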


\begin{proof}
Denote $\gamma_t:=e^{t \wt Z}x$ and let $t, \t\in (-1, 1)$.  Then
\begin{equation*}
 \begin{aligned}
|\wt Y_J^K(\g_\t)  -\wt Y_J^K(\g_t)|
& = \Big| \sum_{1\le \a\le p} dx^{K}(\dots,   \wt Y_{j_{\a+1}}(\g_t), \wt
Y_{j_\a}(\g_\t)-
\wt Y_{j_\a}(\g_t),
  \wt Y_{j_{\a+1}}(\g_t), \dots )\Big|
\\&\le C \abs{\t-
t},
\end{aligned}
\end{equation*}
where $C$ depends on $L_0$ in \eqref{lipo}.  Then $t\mapsto \wt Y_J^K(\g_t)$ belongs to 
$\Lip_\Eucl(-1, 1)$.
The estimate for the Lipschitz constant here   is quite rough and it can be
refined through a computation of the derivative. Indeed, we claim that for a.e.
$t\in (-1, 1)$ we have
\begin{equation}\begin{aligned}\label{maino}
 \frac{d}{dt} \wt Y_J^K(\g_t)
&  =
\sum_{\substack{1\le\a\le p\\ \ell_{j_\a}\le
s-1}}  dx^K(\dots, \wt Y_{j_{\a-1}}, [{\wt Z}, \wt Y_{j_\a}], \wt Y_{j_{\a+1}},
\dots,
\wt Y_{j_p})(\g_t)
\\& +\sum_{\substack{ 1\le \a\le  p\\
\ell_{j_\a}= s}}   \sum_{1\le \b\le q} b_{\a
}^\b(\g_t)
dx^K(\dots,
\wt Y_{j_{\a-1}}, \wt Y_\b, \wt Y_{j_{\a+1}}, \dots, \wt Y_{j_p})(\g_t)
\\&+ \sum_{1\le \g\le n}\sum_{1\le \b\le p} \p_\g  \wt f^{k_\b}
dx^{(k_1,\dots,k_{\b-1},\g,k_{\b+1},\dots, k_p)} ( \wt Y_{j_1},\dots,\wt
Y_{j_p})(\g_t)
\\&=: (A)+(B)+(C),
\end{aligned}
\end{equation}
where we wrote $\wt Z= \wt f\cdot\nabla\in C^1_\Eucl$ and   $b_\a^\b$ are
measurable
functions with  $\abs{b_{\a }^\b}\le C_0$.
To prove \eqref{maino}, observe that, if
  $\ell(Y_{j_\a})\le s-1$,  then
 $t\mapsto \wt Y_{j_\a}(\g_t)$ is
$C^1_\Eucl(-1, 1)$ and
\begin{equation*}
\begin{aligned}
 \lim_{\t\to t}\frac{ \wt Y_{j_\a}(\g_\t)- \wt Y_{j_\a}(\g_t)}{\t- t} & = {\wt
Z}^\sharp
\wt g_{j_\a}(\g_t) \cdot\nabla =
[{\wt Z} , \wt Y_{j_\a}](\g_t )  +\wt Y_{j_\a} \wt f(\g_t)\cdot\nabla
\quad
\text{for all $t\in [-1, 1]$. }
\end{aligned}
\end{equation*}
Note that here we used \cite[Theorem~3.1]{MontanariMorbidelli11a}   to claim that $\ad_{\wt Z}\wt Y_{j_\a}
= [\wt Z, \wt Y_{j_\a}]$.
If instead $\ell(Y_{j_\a})= s$, then
for almost any $t$ we have
\begin{equation}\label{gigos}
\begin{aligned}
 \lim_{\t\to t}
\frac{\wt Y_{j_\a}(\g_\t)- \wt Y_{j_\a}(\g_t)}{\t- t}
& = {\wt Z}^\sharp  \wt g_{j_\a}(\g_t) \cdot\nabla = \ad_{{\wt Z}}\wt
Y_{j_\a}(\g_t )+
\wt Y_{j_\a}
\wt f (\g_t)\cdot\nabla
\\& =\sum_{\b=1}^q b_\a^\b(t)  \wt Y_\b(\g_t) +
\wt Y_{j_\a}\wt f(\g_t)\cdot\nabla.
\end{aligned}
\end{equation}
In the first equality we   used the  definition of $\ad$. Here  $\wt
Y_{j_\a}\wt f:=
\wt g_{j_\a}\cdot\nabla \wt f$, is well
defined.
In the second line we used   Proposition \ref{misod}.
The term $\wt Y_{j_\a}\wt f$,  in
view of Lemma \ref{ollo} gives the third line of \eqref{maino}.

Next we estimate each line of \eqref{maino},
starting with $(A)$.
\begin{equation*}
\begin{aligned}
  |(A)|& \le \big|dx^K(\dots, \wt Y_{j_{\a-1}}(\g_t), [{\wt Z},
\wt Y_{j_\a}](\g_t), \wt Y_{j_{\a+1}}(\g_t), \dots ) \big|
\le C
\abs{\Lambda_p(\g_t, r)},
\end{aligned}
\end{equation*}
for all $t\in[-1, 1]$. Estimate is correct even if $\Lambda_p(\g_t, r)=0$.
To estimate  $(B)$, recall that $|b_{\a }^\b|\le C$. Then,
for all
$t\in[-1,1]$,
\begin{equation*}
\begin{aligned}
 |(B)| &\le \sum_{1\le\a\le p} \sum_{1\le \b\le q} \big| dx^K(\dots,\wt
Y_{j_{\a-1}}, \wt Y_\b,
\wt Y_{j_{\a+1}}, \dots)\big|
\le C\abs{\Lambda_p(\g_t,r)}.
\end{aligned}
\end{equation*}
Finally  the estimate of $(C)$ is easy and takes the form
 \begin{align*}
 \abs{(C)}& \le \sup_{B_d(x, r)}|\nabla \wt f|\max_{K\in
\I(p,n)}|\wt Y_J^K(\g_t)|  \le C\abs{\Lambda_p(\gamma_t,
r)}\quad \text{if $\abs{t}\le 1$.}\qedhere
\end{align*}
\end{proof}

The previous lemma immediately implies the following proposition.
\begin{proposition} \label{valida}
Let $\H$ be a family in the regularity class $\A_s$.
 Let $x\in\Omega$, let $r\le r_0$, where $r_0$ is small enough so that $B_d(x,
r_0)\subset\Omega_0$. Let $\gamma(t):=\g_t$ be a piecewise integral curve of $\pm r\H$ with $\gamma(0)=x$.
 Let $p\in\{1,\dots, q\wedge n\}$. Then we have
\begin{equation}\label{spray}
 \big|\Lambda_p(\g(t),r)- \Lambda_p(x,r)\big|\le
\abs{\Lambda_p(x,r)}\,(e^{Ct}-1)\quad\text{for all $t\in[0,1]$}.
\end{equation}
In particular, 
if $p= p_x$ and   $(I, x, r)$ is $\eta$-maximal,
then  
\begin{equation}\label{damettere}
\abs {\wt Y_J(\g(t))- \wt Y_J(x)}
\le \frac{C t}{\eta}\abs{\wt Y_I(x)}
\quad\text{for all $J\in \I(p,q)\quad t\in[0,1]$.}
\end{equation}
Finally, if $x,y$ belong to the same orbit, then  
$p_x   =  p_{y}$.
\end{proposition}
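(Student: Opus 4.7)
For the first estimate \eqref{spray} I would set $f(t) := |\Lambda_p(\gamma(t),r) - \Lambda_p(x,r)|$, which is continuous on $[0,1]$ with $f(0)=0$. On each smooth piece of $\gamma$, which is an integral curve of some $\wt Z \in \pm r\H$, Lemma~\ref{lili} gives $|(d/dt)\wt Y_J^K(\gamma(t))| \le C |\Lambda_p(\gamma(t),r)|$ a.e., for every $J \in \I(p,q)$ and $K \in \I(p,n)$. Summing over the finitely many pairs $(J,K)$ and absorbing combinatorial factors into $C$, this upgrades to $|f'(t)| \le C|\Lambda_p(\gamma(t),r)| \le C|\Lambda_p(x,r)| + Cf(t)$ a.e.~on $[0,1]$. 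Integrating yields $f(t) \le Ct|\Lambda_p(x,r)| + C\int_0^t f(s)\,ds$, and Gronwall's inequality~\eqref{grammatica} with $a = C|\Lambda_p(x,r)|$ and $b=C$ delivers precisely~\eqref{spray}. Note that the curve stays inside $\Omega_0$: from the very definition~\eqref{dicocco} of $d$ one has $d(\gamma(t),x)\le rt\le r\le r_0$ for $t\in[0,1]$, so Lemma~\ref{lili} is applicable on every piece.

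For~\eqref{damettere} I would exploit the $\eta$-maximality of $(I,x,r)$, which forces $|\wt Y_J(x)|\le\eta^{-1}|\wt Y_I(x)|$ for every $J\in\I(p_x,q)$, and hence $|\Lambda_{p_x}(x,r)|\le C\eta^{-1}|\wt Y_I(x)|$, the constant $C$ merely counting multiindices. Since componentwise $|\wt Y_J(\gamma(t)) - \wt Y_J(x)|\le|\Lambda_{p_x}(\gamma(t),r)-\Lambda_{p_x}(x,r)|$, applying~\eqref{spray} together with the elementary bound $e^{Ct}-1\le Ct$ on $[0,1]$ (after enlarging $C$) yields \eqref{damettere}.

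For the dimensional equality $p_x = p_y$, I would first observe that by symmetry of the orbit relation, realized by reversing the curve through the substitutions $Z_j\mapsto -Z_j$, it suffices to prove $p_y\ge p_x$. Pick a representation $y = e^{t_1 Z_1}\cdots e^{t_N Z_N}x$ with $Z_j\in\H$ and form the intermediate points $x_0:=x$, $x_k := e^{t_{N-k+1}Z_{N-k+1}}x_{k-1}$, so $x_N=y$. After subdividing each step if necessary, I may assume each transition $x_{k-1}\to x_k$ is a time-$T_k$ integral curve of some $\wt Z_k\in\pm r_k\H$ with $r_k\le r_0$ small enough that $B_d(x_{k-1},r_k)\subset\Omega_0$ and the Gronwall factor $e^{CT_k}-1$ from~\eqref{spray} is strictly smaller than $1$. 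Setting $p:=p_x$, estimate~\eqref{spray} applied on the first step gives $|\Lambda_p(x_1,r_1)|\ge|\Lambda_p(x_0,r_1)|(2-e^{CT_1})>0$, hence $p_{x_1}\ge p$. Iterating on $k=1,\dots,N$ produces $|\Lambda_p(y,r_N)|>0$, i.e.~$p_y\ge p$.

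The main technical concern is the piecewise handling in the Gronwall step: one has to verify that $f$ is absolutely continuous on $[0,1]$ even though $f'$ is defined only a.e.~on each smooth piece. This reduces to the observation that $\gamma$ has only finitely many junctions and that on each piece $f$ is Lipschitz, so $f$ is a finite concatenation of Lipschitz pieces; the iteration across junctions is then routine.
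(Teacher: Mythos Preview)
Your argument is correct and, for \eqref{spray} and \eqref{damettere}, follows the paper's route almost verbatim: the paper also bounds $\bigl|\tfrac{d}{dt}\Lambda_p(\gamma_t,r)\bigr|\le C\,|\Lambda_p(\gamma_t,r)|$ via Lemma~\ref{lili} and then feeds this into the Gronwall inequality~\eqref{grammatica}; you simply spell out the intermediate step $f(t)\le Ct\,|\Lambda_p(x,r)|+C\int_0^t f$ that the paper leaves implicit.

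For the constancy of $p_x$ along orbits the paper takes a slightly different path. Rather than iterating~\eqref{spray} along a subdivided chain and propagating strict positivity of $|\Lambda_p|$, the paper fixes a piecewise integral curve $\gamma:[a,b]\to\R^n$ joining $x$ to $y$, sets $A_p:=\{t:\,|\Lambda_p(\gamma(t),r)|=0\}$, and observes that $A_p$ is closed (continuity) and open (because~\eqref{spray} forces $\Lambda_p$ to vanish on a whole neighbourhood once it vanishes at a point). Connectedness of $[a,b]$ then gives $A_p=\varnothing$ or $A_p=[a,b]$, hence $|\Lambda_p(x,r)|=0\iff|\Lambda_p(y,r)|=0$ for every $p$, and $p_x=p_y$. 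Your subdivision-and-iteration argument reaches the same conclusion and is equally rigorous; the clopen argument is a touch more economical since it avoids keeping track of the scales $r_k$ and the Gronwall factors along the chain, while your version has the mild advantage of being explicitly quantitative at each step.
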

\begin{remark}\label{osservo}
As a consequence of the proposition and of the Cramer's rule \eqref{cromo}, if
$(I,x,r)$ is $\eta$-maximal, then $(I,
y, r)$ is
$ C^{-1}\eta$-maximal for all $y\in B_d(x, C^{-1}\eta r)$ and we may write for
all such $y$ and for any  $j\in\{1,\dots,q\}$
\begin{equation}
\label{giorgetto}
\wt Y_{j,y} = \sum_{k=1}^p \frac{b_j^k }{\eta}\wt Y_{i_k,
y},
 \end{equation}
where $\abs{b_j^k}\le C$.
\end{remark}

\begin{remark}\label{optima} 
 Proposition \ref{valida} shows that the oscillation of determinants
$\Lambda_p$ on a ball is controlled in terms of the value of $\Lambda_p$ at the
center of the ball. It is not true that the oscillation of a single vector
field on a ball can be controlled by its value at the center of the ball.
 For instance, we can take   the vector fields
 $X=\p_x$ and $Y=y\p_y + x\p_x$. Look at the   ball $B((0,y), r)$, where
 $0< y \ll r$.
Note that
$(r,y) $ belongs to such ball, but the oscillation $\abs{Y(0,y)- Y(r,y)} \sim
r$  can not be controlled with the value $\abs{Y(0,y)} = \abs{y}$.
\end{remark}

\begin{proof}[Proof of Proposition \ref{valida}]
 (See \cite{TaoWright03,MM,Street}). Let $p\in \{1,\dots,q\wedge n\}$.
By Lemma~\ref{lili}, the map  $t\mapsto \Lambda_p(\g_t,r) $ is  Lipschitz.
 Moreover,
we have for a.e. $t\in[0, 1]$,
\begin{equation*}
 \begin{aligned}
\Big|\frac{d}{d t}\Lambda_p(\g_t,r)\Big|
&=
\Big|\Big(\frac{d}{d t}\wt Y_J^K(\g_t)\Big)_{\substack{J\in \I(p,q)\\K\in\I(p,n)}}\Big|
\le C \abs{\Lambda_p(\g_t,r)},
\end{aligned}
\end{equation*}
by Lemma \ref{lili}. Then the Gronwall's inequality  \eqref{grammatica} provides immediately  
the required estimate \eqref{spray}.
Note that this implies that
if $\Lambda_p(x,r)=0$, then  $\Lambda_p(\g_t,r)=0$ for all $t\in[0, 1]$.
Estimate \eqref{damettere} follows immediately.

Let now $x $ and $y$ be a couple of points on 
 the same leaf $\cal{O}_\H$.  
Let $1\le p\le q\wedge n$ and let $I\subset\R$ be an interval.
Let $I= [a,b]$ and take  $\gamma:I\to \R$  a
piecewise integral curve of the vector fields $X_j$ with $\gamma(a)= x$ and $\gamma(b)=y$. 
Let $A_p:=\{t\in
I:\abs{\Lambda_p(\gamma(t))}=0\}$. 
Note that   $A_p$ is closed, because it is the zero
set of the continuous function $I \ni t\mapsto
\abs{\Lambda_p(\gamma(t))}\in\R$. The set $A_p$  is also open by estimate \eqref{spray}.
Therefore, either
$A_p=\varnothing$ or $A_p= I$ and the proof is concluded.
\end{proof}

The fact we are going to establish in the following theorem will have a key
role in Subsection~\ref{pastorius}, when we shall study our almost exponential
maps $E$. See
 Remark~\ref{noterella} below.
\begin{theorem}
\label{viola}
Let $\H$ be a family of vector fields of class $\A_s$. Let $(I, x, r)$ be
$\eta$-maximal  where $x\in \Omega$, $r\le r_0$,  $I\in \I(p_x, q)$ and
$\eta\in (0,1)$. Denote
$\wt U_j:= r^{\ell_{i_j}} Y_{i_j}$ for $j=1,\dots, p:= p_x$ and $\wt Z:= r Z\in \pm
r\H $. Then there is $C>0$ depending on $L_0$ and $C_0$ in
\eqref{lipo} and \eqref{artefatto} so that 
\begin{equation}\label{dentro}
e^{-t\wt Z}_*(\wt U_{j, e^{t\wt Z}x})\in P_x\quad\text{for all $t$ with
$\abs{t}\le C^{-1}\eta$}.
\end{equation}   Moreover, if we
write, for a given test function $\psi\in C^1_\Eucl(\R^n)$,
 \begin{equation}\label{75}
\wt U_j(\psi e^{-t   \wt Z})(e^{t \wt  Z}x)   = :
\sum_{k=1}^p
\big(\delta_j^k + \theta_j^k(t) \big) \wt U_k\psi(x),
\end{equation}
then    we have
\begin{equation}\label{zast} 
 |\theta_j^k(t)| \le \frac{C |t|}{\eta} \quad\text{for all
$j,k=1,\dots,p$\qquad
 $  \abs{t} \le C^{-1}\eta$}.
\end{equation}
Finally,  for any commutator $\wt Y_h := \wt g_h\cdot\nabla$, where  $h\in
\{1,\dots,q\}$, we
have at  any  $t\in (-C^{-1}\eta ,  C^{-1}\eta)$
\begin{equation}
\label{909}
\wt Y_h(\psi e^{-t\wt Z})(e^{t\wt Z}x) = \sum_{k=1}^p \frac{b_h^k(t)}{\eta}\wt
U_k\psi(x), \end{equation}
where  $  |b_h^k(t)|\le C$ if $\abs{t}\le C^{-1}\eta$.
\end{theorem}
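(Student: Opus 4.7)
The plan is to reformulate all three claims as statements about the $\R^n$-valued curves
$\phi_j(t):=(e^{-t\wt Z})_*(\wt U_{j,\, e^{t\wt Z}x})\in T_x\R^n\simeq\R^n$, $j=1,\ldots,p$. Indeed, \eqref{75}--\eqref{zast} together with \eqref{dentro} say exactly that $\phi_j(t)=\sum_k(\delta_j^k+\theta_j^k(t))\wt U_{k,x}$ with the stated bound, while \eqref{909} is the analogous statement for $(e^{-t\wt Z})_*\wt Y_{h,\,e^{t\wt Z}x}$. The strategy is to derive a closed linear ODE system for the $\phi_j(t)$'s, with matrix norm bounded by $C/\eta$, and then apply Gronwall.

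To derive the ODE, I would start from the standard Lie-derivative identity
\begin{equation*}
\tfrac{d}{dt}\phi_j(t)=(e^{-t\wt Z})_*(\ad_{\wt Z}\wt U_j)(e^{t\wt Z}x)\qquad\text{for a.e.~}t,
\end{equation*}
which for $\ell_{i_j}\le s-1$ just says $\ad_{\wt Z}\wt U_j=[\wt Z,\wt U_j]$ (already a commutator of length $\le s$), and for $\ell_{i_j}=s$ requires the a.e.~existence of $\wt Z^\sharp\wt g_{i_j}$ along $t\mapsto e^{t\wt Z}x$ granted by Proposition \ref{misod}; this is essentially the computation underlying \eqref{gigos} in Lemma \ref{lili}. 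The $\A_s$ condition in its scaling-invariant form \eqref{artusi} then yields
\begin{equation*}
\ad_{\wt Z}\wt U_j(e^{t\wt Z}x)=\sum_{1\le\abs v\le s}b_j^v(t)\wt X_v(e^{t\wt Z}x),\qquad \abs{b_j^v(t)}\le C_0,
\end{equation*}
with measurable $b_j^v$. Since $d(x,e^{t\wt Z}x)\le\abs t$, the point $e^{t\wt Z}x$ lies in $B_d(x,C^{-1}\eta r)$ for $\abs t\le C^{-1}\eta$, so Remark \ref{osservo} (the Cramer expansion \eqref{giorgetto}) lets me rewrite each $\wt X_v(e^{t\wt Z}x)$ as $\sum_{k=1}^p\tfrac{\mu_v^k(t)}{\eta}\wt U_k(e^{t\wt Z}x)$ with $\abs{\mu_v^k}\le C$. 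Pushing forward by $(e^{-t\wt Z})_*$ and combining produces the system
\begin{equation*}
\phi_j'(t)=\sum_{k=1}^p\frac{d_j^k(t)}{\eta}\phi_k(t)\quad\text{for a.e.~}t,\qquad \phi_j(0)=\wt U_{j,x},\qquad \abs{d_j^k(t)}\le C.
\end{equation*}

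Because the right-hand side is a linear combination of $\phi_1,\ldots,\phi_p$, Carath\'eodory uniqueness forces $\phi_j(t)\in\Span\{\phi_k(0):k=1,\ldots,p\}=P_x$ on the existence interval, which already gives \eqref{dentro}. Writing $\phi_j(t)=\sum_k a_j^k(t)\wt U_{k,x}$ (well defined by linear independence of $\wt U_{k,x}$, guaranteed by $\eta$-maximality), the matrix $A(t)=(a_j^k(t))$ solves $A'(t)=\eta^{-1}D(t)A(t)$ with $A(0)=I$; writing $A=I+(A-I)$ and applying the Gronwall inequality \eqref{grammatica} to $\abs{A(t)-I}$ yields $\abs{A(t)-I}\le C\abs t/\eta$ for $\abs t\le C^{-1}\eta$, which is \eqref{75}--\eqref{zast}. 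Claim \eqref{909} then follows without new work: Cramer at $y=e^{t\wt Z}x$ gives $\wt Y_h(y)=\sum_k\tfrac{c_h^k(t)}{\eta}\wt U_k(y)$ with $\abs{c_h^k}\le C$, and pushing forward together with \eqref{75} produces $b_h^\ell(t)=\sum_k c_h^k(t)(\delta_k^\ell+\theta_k^\ell(t))$, bounded by $C$.

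The one delicate point is justifying the derivative identity for $\phi_j$ when $\ell_{i_j}=s$, since there $\wt g_{i_j}$ is only $d$-Lipschitz; the saving grace is that along the integral curve of $\wt Z$ the composition is Euclidean Lipschitz in $t$, and its a.e.~derivative is measurable and bounded by the involutivity hypothesis, exactly as exploited in Lemma \ref{lili}. Once this is in place the rest is a clean linear-ODE argument, and the factor $1/\eta$ in the coefficients---produced by Cramer---is precisely what forces the smallness window $\abs t\le C^{-1}\eta$.
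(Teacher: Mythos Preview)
Your proposal is correct and follows essentially the same route as the paper: differentiate the pushed-forward vectors to obtain a closed linear ODE system with coefficients bounded by $C/\eta$ (via the $\A_s$ condition and the Cramer expansion \eqref{giorgetto}), then invoke uniqueness for \eqref{dentro} and Gronwall for \eqref{zast}. The only cosmetic difference is that the paper phrases everything through the test-function pairing $\wt U_j(\psi\, e^{-t\wt Z})(e^{t\wt Z}x)$ rather than the geometric pushforward $\phi_j(t)$, and for \eqref{909} the paper says to ``repeat the computation'' (i.e.\ run the same ODE argument starting from $\wt Y_h$), whereas your direct Cramer-at-$e^{t\wt Z}x$ followed by pushforward is a slightly cleaner shortcut that uses the already-proved \eqref{75}.
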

\begin{remark}\label{noterella}
The geometric interpretation of \eqref{dentro} tells  that $e^{-t \wt
Z}_*P_{e^{t\wt Z}x} = P_x$, i.e. the tangent map of the $C^1$ diffeomorphism $e^{-t\wt Z}$
maps the (candidate) tangent bundle  $\cup_x P_x $ to the orbit $\cal{O}$
 to itself (we say ``candidate'' because we do not know yet that  $\O$
is a
manifold).
Theorem \ref{viola} has an important consequence.
Namely, in   in Theorem \ref{dascalare}, it will enable us to
show that integral remainders have in fact a  pointwise form.
Ultimately, we will apply such property in Theorem  \ref{deduciamo} to show that
$E_*(\p_{h_k})\in P_{E(h)}$.
\end{remark}

\begin{remark}\label{lobrao}
The proof below is inspired to an argument due to Lobry; see \cite[Lemma~1.2.1]{Lobry70}.
 Here we generalize 
such argument to a higher order, nonsmooth situation and we get more quantitative estimates.
See also  
 \cite{Lobry76} and the related  discussion 
 by  Balan
\cite{Balan}; see finally the paper~\cite{Pelletier}, for an up-to-date
bibliography on the subject.
Note that Lobry's idea is  also used in
\cite[Lemma 5.15]{Agrachev}.
\end{remark}

\begin{proof}[Proof of Theorem \ref{viola}]
Without loss of generality,   we can  work with positive values of $t$.
First, we  differentiate the left-hand side of \eqref{75}. If $\ell_{i_j}\le
s-1$,
then we use \cite[Theorem~2.6-(a) and Theorem~3.1-(ii)]{MontanariMorbidelli11a}  which give
\begin{equation}\label{popo}
\begin{aligned}
 \frac{d}{dt}\wt U_j(\psi e^{-t \wt Z})(e^{t \wt Z} x)& =[\wt Z, \wt
U_j](\psi e^{-t
\wt Z})(e^{t\wt Z}x)
= \sum_{k=1}^p \frac{b_j^k(t)}{\eta} \wt  U_k(\psi e^{-t\wt Z}) (e^{t\wt
Z}x),
\end{aligned}
\end{equation}
provided that $0<t\le C^{-1}\eta $. Here $\abs{b_j^k(t)}\le C$. In last
equality we used
\eqref{giorgetto} with $\wt Y_h = [\wt Z,
\wt U_j]$.

   If instead
$\ell_{i_j}= s$, then we need first \cite[Theorem 2.6-(b)]{MontanariMorbidelli11a}, 
then \eqref{sussmann} and Proposition \ref{misod} in the present paper. This gives
 for a.e. $t\in[0, C^{-1}\eta]$
\begin{equation}\label{popo2}
\begin{aligned}
\frac{d}{dt}\wt U_j (\psi e^{-t \wt Z})(e^{t\wt Z} x)& =  \sum_{1\le h\le
q } b_j^h(t) \wt Y_h
(\psi e^{-t
 \wt Z})(e^{t\wt Z}x) \quad \text{by \eqref{giorgetto}}
\\&=
\sum_{1\le h\le q}\sum_{1\le k\le p} b_j^h(t)  b_h^k(t)\frac{ 1}{\eta} \wt
U_k
(\psi e^{-t \wt Z})(e^{t \wt Z}x)
\\&=:\sum_{1\le k\le p} \frac{b_j^k(t)}{\eta} \wt U_k(\psi e^{-t \wt Z})(e^{t
\wt Z}x)
\end{aligned}
\end{equation}
provided that  $0<t\le C^{-1}\eta $. In this formula
$b_j^h$, $b_h^k$ and $b_j^k$ denote measurable   functions, bounded in term of
the admissible constants $C_0$ and $ L_0$.

By elementary ODE theory, for any fixed $\psi$, the    functions $t\mapsto \wt
U_j(\psi e^{-t\wt
Z})(e^{t\wt
Z}x) $ with $j=1,\dots,p$ are uniquely determined by their value
$\wt U_j\psi(x)$ at $t=0$.
Moreover, if we denote by $(a_j^k(t))\in \R^{p\times p}$
the solution of the Cauchy problem
\begin{equation}\label{odofusi}
\dot a(t)= \frac{b(t)}{\eta} a(t) \quad\text{with}\quad  a(0)= I_p\in\R^{p\times p},
                         \end{equation} then we can write
\begin{equation}
e_*^{-t \wt Z}(\wt U_{j, e^{t\wt Z}x})\equiv \wt U_j(\psi e^{-t \wt Z})(e^{t\wt
Z}x)
= \sum_{k=1}^p a_j^k(t) \wt U_k\psi(x).
\end{equation}
Then we have proved \eqref{dentro}. 
The Cramer's rule \eqref{cromo} confirms that the coefficients $a_j^k(t)$ are
unique for each $t$.

To estimate the functions
 $\theta_j^k:= a_j^k(t)- \delta_j^k   $,  where $a_j^k$ satisfy \eqref{odofusi},
it
suffices to use estimate  $|b_{j}^k(t)|\le C$ if $0\le t \le C^{-1}\eta $.
The
Gronwall inequality \eqref{grammatica} gives
\(
 |a_j^k(t) - \delta_j^k|\le  C |t|/\eta \)   for all
$j,k=1,\dots, p$ and $
0< t\le C^{-1}\eta .$ Therefore \eqref{zast} follows.

To obtain the  proof of \eqref{909} it suffices to repeat the computation in
\eqref{popo}  starting from $\wt Y_h$ instead of $\wt
U_j$. This ends the proof.
\end{proof}

Under the hypotheses of Theorem \ref{viola}, iterating the argument,  we get
for all $x\in \Omega$, $\mu\le N_0$ (see \eqref{hello}),  $j\in\{1,\dots,p\}$ and $Z_1,\dots,
Z_\mu\in\H$,
\begin{equation}\label{jappo}
 \wt U_j(\psi e^{-t_1 \wt Z_1}\cdots e^{-t_\mu \wt Z_\mu})(e^{t_\mu \wt
Z_\mu}\cdots e^{t_1
\wt Z_1}x) = \sum_{1\le k\le p} (\d_j^k +\theta_j^k(t))\wt U_k\psi(x)
\end{equation}
where
$\abs{\theta(t)}\le  C \abs{t}/\eta$,
as soon as $\sum_{j=1}^\mu\abs{t_j}\le C^{-1}\eta$.
Moreover, for each $h\in \{1,\dots,q\}$, we get, if $x\in \Omega $, for the
same values of $(t_1,\dots, t_\mu)$ and for almost  all $\t\in (- C^{-1}\eta,
C^{-1}\eta)$,
\begin{equation*}
\begin{aligned}
\frac{d}{d\t}\wt Y_h&
(\psi e^{-t_1 \wt Z_1}  \cdots e^{-t_\mu \wt Z_\mu}e^{-\t \wt X})
( e^{\t \wt X}e^{t_\mu \wt
Z_\mu}\cdots e^{t_1
\wt Z_1}x)
\\&= \ad_{\wt X}\wt{Y}_h(\psi e^{-t_1 \wt Z_1}\cdots e^{-t_\mu \wt Z_\mu} e^{-\t
\wt X})
(e^{\t \wt X}e^{t_\mu \wt
Z_\mu }\cdots e^{t_1
\wt Z_1}x)
=\sum_{k=1}^p \frac{b_k(x,t,\tau )}{\eta}\wt U_k\psi(x),
\end{aligned}
\end{equation*}
where $\abs{b_k(x,t,\t)}\le C$ for a.e.~$\t$. Here  $X\in  \H$. If we do not
care about maximality and choose $r= 1$, we get,
for any fixed $(t_1, \dots, t_\mu)$ with  $\sum_j\abs{t_j}\le C^{-1}$ and for
almost all $\t  $ with $\abs{\t}\le C^{-1}$,
\begin{equation}
 \label{jpn4}
\begin{aligned}
\frac{d}{d\t}  Y_h & (\psi e^{-t_1  Z_1}\cdots e^{-t_\mu   Z_\mu}e^{-\t X})
(e^{\t X}e^{t_\mu
Z_\mu}\cdots e^{t_1
  Z_1}x)
\\ & =
 \ad_{  X} {Y_h}  (\psi e^{-t_1  Z_1}\cdots e^{-t_\mu   Z_\mu} e^{-\t X})
(e^{\t X}e^{t_\mu
Z_\mu}\cdots e^{t_1
  Z_1}x)
\\& 
=\sum_{1\le j\le q}  b_j(x,t,\tau)  Y_j\psi(x),
\end{aligned}
\end{equation}
where $\abs{b_j (x,t,\t)}\le C$ for a.e.~$\t$. Here again $x\in \Omega$ and
$\psi\in C^1_\Eucl$
is a test function.
Formula \eqref{jpn4}
  will be referred to later.

\subsection{Derivatives of almost exponential maps and regularity of orbits}
\label{pastorius}
In this subsection we get several information on  the derivatives of 
the
approximate
exponentials $E_{I,x,r}$ associated with a family $\H$ of $\A_s$ vector
fields and we
 show that each orbit $\cal{O}$ with topology $\t_d$ is a $C^1$
immersed submanifold of $\R^n$ with $T_y\O= P_y$ for all $y\in\O$.
 We will  tacitly but heavily rely
 on the results of \cite[Section~3]{MontanariMorbidelli11a},
namely on  formulae
\begin{equation}\label{usbus} 
 \ad_{X_{v_1}}\cdots \ad_{X_{v_k}}X_w = X_{vw}\quad\text{for all $v,w$ such
that $\abs{v}+ \abs{w} = k+\abs{w}\le s$}
\end{equation}
These formulae have   a key
role. In the
proof of  Theorem \ref{dascalare} below, we shall  follow the arguments of 
 \cite[Theorems 3.4 and  3.5]{MM},
 modifying
everywhere the remainders $O_{s+1}$ in \cite{MM}
with our remainders defined in \cite{MontanariMorbidelli11a}.
This will give us a  formula with integral remainder, see  \eqref{grall}.
Then, using the results of Subsection~\ref{fiveone}, we shall show  that such
integral remainder can be
specified in a pointwise form.

\begin{theorem}\label{dascalare}
 Let $1\le \abs{w}=:\ell\le s$, take  $x\in\Omega$ and  $ t\in [0,
t_0]$, where  $t_0$ is small enough to ensure that $C_t x\in
\Omega_0$ for all $t\in[0, t_0]$.
Let $C_t=C_t(X_{w_1}, \dots, X_{w_\ell})$ be the map defined in
\eqref{navetta}. Fix a test function $\psi\in
C^1_\Eucl(\R^n)$. Then we have
\begin{equation*}
  \frac{d}{dt}\psi (C_t x )= \ell t^{\ell-1} X_w \psi(C_tx) +\sum_{|v|=\ell+1}^s
a_v t^{|v|-1}
  X_v\psi(C_tx)
+t^s\sum_{\abs{u}=1}^s b_u(x,t)X_u\psi(C_t x),
\end{equation*}
and
\begin{equation*} 
\begin{aligned}
  \frac{d}{dt}\psi(C_t^{-1} x) = & - \ell t^{\ell-1} X_w \psi(C_t^{-1}x)
+\sum_{|v|=\ell+1}^s \ol a_v
  t^{|v|-1}
  X_v \psi(C_t^{-1}x)
\\&
  + t^s \sum_{\abs{u}=1}^s \ol b_u(x,t)X_u\psi (C_t^{-1}x).
\end{aligned}
\end{equation*}
 Both the sums on $v$ are empty if $\abs{w}=s$. Otherwise,
 we have the
cancellations
$\sum_{\abs{v}=\ell+1}(a_v+\ol a_v)f_v(x)=0$ for all $x\in \Omega$.
The (real) coefficients $b_u$ and $\ol b_u$ are bounded in terms of
the constants $L_0$ and $C_0$ in \eqref{lipo} and \eqref{artefatto}.
\end{theorem}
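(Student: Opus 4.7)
The plan is to prove both formulas simultaneously by induction on $\ell := |w|$, following the pattern of \cite[Theorems 3.4--3.5]{MM} as a template, but using the algebraic identities \eqref{usbus} to identify nested commutators cleanly and replacing their integral-form remainders with the pointwise remainder asserted here. For the base case $\ell = 1$, we have $C_t = e^{t X_{w_1}}$ and $C_t^{-1} = e^{-t X_{w_1}}$, so both identities reduce to the definition of the vector field action, with $a_v, b_u, \ol a_v, \ol b_u$ all zero.

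For the inductive step, set $A_t := C_t(X_{w_2}, \dots, X_{w_\ell})$, a word of length $\ell - 1$, so that by \eqref{navetta} one has $C_t = A_t^{-1}\, e^{-t X_{w_1}}\, A_t\, e^{t X_{w_1}}$. I would differentiate $\psi(C_t x)$ through the four $t$-dependent factors by the chain rule. The two $A_t^{\pm 1}$ contributions are controlled by the inductive hypothesis (applied to $\psi$ composed with the remaining factors on its right), while the two $e^{\pm t X_{w_1}}$ factors contribute $\pm X_{w_1}$ at the appropriate intermediate points. To collect everything as operators acting at the final point $C_t x$, each intermediate vector field is pulled back along the remaining piece of the path using the non-commutative calculus of \cite[Section~3]{MontanariMorbidelli11a}: iterated $\ad$-operations, together with \eqref{usbus}, produce precisely the nested commutators $X_v$ of the required lengths. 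The leading $X_{w_1}$ contributions from the two exponentials cancel (they enter with opposite signs), and the first surviving term is $\ell t^{\ell-1}[X_{w_1}, X_{w_2\cdots w_\ell}]\psi = \ell t^{\ell-1} X_w\psi$, the prefactor $\ell$ arising by combining the inductive coefficient $(\ell-1)t^{\ell-2}$ for $A_t$ with the extra factor $t$ produced by one step of conjugation by $e^{t X_{w_1}}$.

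Terms of intermediate order $\ell+1 \le |v| \le s$ organize into $\sum a_v t^{|v|-1} X_v\psi(C_t x)$, with coefficients $a_v$ determined by elementary combinatorics of the nested structure of $C_t$. The cancellation $\sum_{|v|=\ell+1}(a_v + \ol a_v) f_v(x) = 0$ is derived by applying both formulas to the coordinate functions $\psi(y) = y^i$, noting that $C_t^{-1}\circ C_t = \mathrm{id}$ implies that the $(|v|-1)$-jet of $\frac{d}{dt}\psi(C_t x) + \frac{d}{dt}\psi(C_t^{-1}y)|_{y=C_t x}$ must vanish for each $|v| \le s$. Terms of order $t^s$ and higher, after pull-back, assemble into an integral remainder of the form studied in \cite[Theorems 3.4--3.5]{MM}, involving operators $\ad_Z X_u$ with $Z \in \pm\H$ and $|u| \le s$ integrated along the defining flows of $C_t$.

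The main obstacle, and the genuine improvement over \cite{MM}, is converting that integral remainder into a \emph{pointwise} remainder of the form $t^s \sum_{|u|=1}^s b_u(x,t) X_u\psi(C_t x)$. This is where the results of Subsection \ref{fiveone} enter decisively: formula \eqref{jpn4} expresses every $\ad_X Y_h$ evaluated along a concatenation of flows $e^{\pm t_j Z_j}$, applied to a pushed-forward test function, as a bounded linear combination of $Y_j\psi(x)$'s, with coefficients measurable in the time parameters and uniformly bounded in terms of $L_0$ and $C_0$. Substituting this representation into the integral remainder collapses the integral into a sum of the same operators $X_u$ acting at $C_t x$ with bounded coefficients $b_u(x,t)$, which is exactly the claimed form. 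The formula for $\frac{d}{dt}\psi(C_t^{-1}x)$ is proved in parallel, noting that $C_t^{-1}$ is again an iterated commutator configuration of the same combinatorial type, so that the same induction and pointwise-remainder conversion apply.
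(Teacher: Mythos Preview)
Your proposal is correct and follows essentially the same approach as the paper. The paper's proof is terser: it simply invokes \cite[Theorem~3.5]{MM} to obtain formula \eqref{grall} with an integral remainder $O_{s+1}(t^s,\psi,C_t x)$ (noting that the algebraic identities \eqref{usbus} and the remainder calculus of \cite{MontanariMorbidelli11a} allow the computation to go through under the weaker regularity), and then performs exactly the pointwise-remainder conversion via \eqref{jpn4} that you describe. You have unpacked the induction that lives in \cite{MM}, but the key novelty here---expressing the integrand through \eqref{jpn4} as a bounded combination of $X_u\psi(C_t x)$ and then integrating---is identified and handled correctly.
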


\begin{remark}
 As already observed, the theorem just stated improves
\cite[Theorem~3.5]{MM}, both  because we relax regularity  assumptions and because   we devise a pointwise
form of the
remainders. In particular, choosing as $\psi$ the identity function, we see
that the  remainder  belongs to the subspace $P_{C_tx}=\Span\{Y_{j,C_t x}:
j=1,\dots, q\}$ which can be
a strict subspace of $\R^n$.
\end{remark}

\begin{proof}[Proof of Theorem~\ref{dascalare}] We prove the statement for $t>0$. By \cite[Theorem~3.5]{MM}, we know that
 \begin{equation}\label{grall}
  \frac{d}{dt}\psi (C_t x )= \ell t^{\ell-1} X_w \psi(C_tx) +\sum_{|v|=\ell+1}^s
a_v t^{|v|-1}
  X_v\psi(C_tx)
+O_{s+1}(t^s,\psi, C_tx),
\end{equation}
where the numbers  $a_v$ are suitable algebraic coefficients. 
Note that formula \eqref{grall} in \cite{MM} is proved for smooth vectro fields. Using \eqref{usbus} and changing
everywhere the remainders in \cite{MM} with the  remainders introduced in 
\cite[Subsection~2.1]{MontanariMorbidelli11a},
 one can check that all computations fit to our setting. 
Therefore, we only need to
deal with the integral remainders introduced and discussed in~\cite{MontanariMorbidelli11a}.   
Concerning such remainders, recall that
\begin{equation*}
\begin{aligned}
 O_{s+1}(t^s, \psi,C_t x)&  =\text{(sum of terms like)} \int_0^t \omega(t,
\t)\frac{d}{d\t} X_v(\psi\phi^{-1}e^{-\t Z})(e^{\t Z}\phi  C_t x) d\t
\end{aligned}
\end{equation*}
where $\abs{v}=s$, $\phi = e^{t Z_1 }\cdots e^{t Z_\nu}$ and $Z, Z_j\in
\pm \H $.
Next, by \eqref{jpn4}, we may write for a.e.~$\t$
\begin{align*}
 \frac{d}{d\t} X_v(\psi\phi^{-1}e^{-\t Z})(e^{\t Z}\phi  C_t x)
&
=
\sum_{1\le \abs{u}\le s} b_u(x,t,\tau)X_u\psi(C_t x),
\end{align*}
where
for any $t,x$  the functions
$\t\mapsto b_u(x,t,\tau)$ are
measurable and satisfy $\abs{ b_u(t,\tau,x)}\le C$ for a.e.~$\t$. Therefore we
get
\begin{equation*}
 \sum_{1\le \abs{u}\le s}\int_0^t \omega(t,
\t) b_u(x,t,\tau)  d\t
 \; X_u \psi(   C_t
x)=: t^s \sum_{1\le \abs{u}\le s}  b_u(x,t) X_u\psi(C_t x),
\end{equation*}
where $\abs{b_u(x,t)}\le C$ for all $x\in\Omega$ and $\abs{t}\le t_0$.
This ends the proof.
\end{proof}

Our purpose now is  to study the maps
\begin{equation}\label{fivetwenty}
 E(h):= E_{I,x,r}(h) := \expap(h_1 \wt Y_{i_1})\cdots
\expap(h_p \wt Y_{i_p})= \eap^{h_1 \wt U_1}\cdots
\eap^{h_p \wt U_p}x
\end{equation}
where $1\le p\le q$, $I\in \I(p,q)$, $\wt U_k:= \wt Y_{i_k}$ and $d_k:= \ell_{i_k}$. We
always take $x\in \Omega$ and $h$ sufficiently close to the origin so that
$E(h)\in \Omega_0$, see \eqref{hello}.

Some elementary properties of $E$ are contained in the following lemma. Without
loss of generality we choose $r=1$ and  $I= (1,\dots, p)$.

\begin{lemma}\label{torrette}
 The map $ h  \mapsto \eap^{h_1 Y_1}\cdots\eap^{h_p Y_p}x=:E_{I,x}(h)$ satisfies
for $x,x^*\in \Omega$ and $h, h^*\in B_\Eucl(C^{-1})$
\begin{equation}\label{galileo}
 \abs{E_{I,x}(h)- E_{I, x^*}(h^*)} \le
C\big(\bigl\|h-h^*\bigr\|_I+\abs{x-x^*}\big).
\end{equation}
Moreover, for any $w$ with $1\le \abs{w}\le s$,  the
function $F_{X_w}\colon[-C^{-1},C^{-1}]\times \Omega\to \R^{n\times
n}$, defined as
$F_{X_w}(t,x ):= \nabla _x \eap^{t X_w}(x),$ is continuous.
\end{lemma}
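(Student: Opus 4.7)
The plan is to unpack the approximate exponentials $\expap$ into compositions of ordinary flows $e^{s X_j}$ with $X_j\in\H$ and then rely on classical $C^1$-dependence of such flows on parameters. By definition~\eqref{appsto}, each factor $\expap(h_k Y_k)$ in $E_{I,x}(h)$ is (up to inversion) of the form $C_{|h_k|^{1/\ell_k}}(X_{w_1},\dots,X_{w_{\ell_k}})$, i.e.~a composition of boundedly many flows $e^{\pm|h_k|^{1/\ell_k} X_j}$ with $X_j\in\H$. Hence $E_{I,x}(h)=e^{s_1 X_{j_1}}\cdots e^{s_N X_{j_N}}x$ for some $N\le N_0$ and exponents $s_\alpha=\pm|h_{k(\alpha)}|^{1/\ell_{k(\alpha)}}$ with $\sum_\alpha|s_\alpha|\le C\|h\|_I$, uniformly bounded for $h$ in a small Euclidean ball.

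To prove \eqref{galileo} I would split
\[|E_{I,x}(h) - E_{I,x^*}(h^*)|\le |E_{I,x}(h) - E_{I,x^*}(h)|+|E_{I,x^*}(h) - E_{I,x^*}(h^*)|.\]
Since each $X_j\in C^1_\Eucl$ is Lipschitz on $\Omega_0$ with constant bounded in terms of $L_0$, iterated application of Gronwall's inequality \eqref{grammatica} bounds the first summand by $C|x-x^*|$. For the second summand I would interpolate $h^*\to h$ one coordinate at a time; the same Gronwall argument on the flanking factors reduces each step to estimating $|\expap(h_k Y_k)z-\expap(h_k^* Y_k)z|$ for some intermediate $z\in\Omega_0$. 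When $h_k$ and $h_k^*$ share their sign, both compositions share the same sign pattern and differ only in the common parameter $|h_k|^{1/\ell_k}$ vs.~$|h_k^*|^{1/\ell_k}$, so standard ODE comparison gives a bound $\le C\bigl||h_k|^{1/\ell_k}-|h_k^*|^{1/\ell_k}\bigr|\le C|h_k-h_k^*|^{1/\ell_k}$ via the elementary Hölder inequality $|a^{1/\ell}-b^{1/\ell}|\le|a-b|^{1/\ell}$ for $a,b\ge 0$. When the signs disagree, triangulate through the identity $\expap(0\cdot Y_k)z=z$ to get $\le C(|h_k|^{1/\ell_k}+|h_k^*|^{1/\ell_k})\le 2C|h_k-h_k^*|^{1/\ell_k}$, using concavity of $t\mapsto t^{1/\ell_k}$ together with $|h_k-h_k^*|=|h_k|+|h_k^*|$. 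In either case the per-step bound is $\le C\|h-h^*\|_I$, and summing the $p$ contributions closes \eqref{galileo}.

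For the continuity of $F_{X_w}(t,x)=\nabla_x\eap^{tX_w}(x)$, I would invoke the classical joint $C^1$-dependence of ODE flows on $(s,y)$: for each $X_j\in C^1_\Eucl$ the Jacobian $(s,y)\mapsto\nabla_y e^{sX_j}y$ is continuous, as it satisfies the variational equation $\frac{d}{ds}\nabla_y e^{sX_j}y=\nabla f_j(e^{sX_j}y)\,\nabla_y e^{sX_j}y$ with initial value the identity matrix. Writing $\eap^{tX_w}(x)=e^{s_1 X_{j_1}}\cdots e^{s_N X_{j_N}}x$ with $s_\alpha=\pm|t|^{1/|w|}$, the chain rule expresses $F_{X_w}(t,x)$ as a product of Jacobian factors $\nabla_{y_\alpha}e^{s_\alpha X_{j_\alpha}}(y_\alpha)$ evaluated at intermediate points $y_\alpha$; these $y_\alpha$ depend continuously on $(t,x)$ by the first part of the lemma applied to truncated compositions, and continuity of $t\mapsto\pm|t|^{1/|w|}$ on all of $\R$ (including $t=0$) makes each Jacobian factor continuous in $(t,x)$. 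Composition and matrix multiplication then yield joint continuity of $F_{X_w}$.

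I do not foresee serious obstacles: both assertions reduce to the standard Gronwall and ODE-regularity toolkit once the combinatorial structure of $\expap$ is unwound. The only genuine bookkeeping subtlety is the opposite-sign case in the interpolation, where the concavity estimate must be invoked to keep the bound in the $\|\cdot\|_I$-norm rather than degrading to a strictly Hölder one.
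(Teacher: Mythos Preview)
Your proposal is correct and follows essentially the same route as the paper: decompose $\expap$ into compositions of flows of vector fields in $\pm\H$, iterate the Gronwall-based Lipschitz estimate for single flows to obtain \eqref{galileo} (treating the same-sign and opposite-sign cases exactly as the paper does), and derive continuity of $F_{X_w}$ via the chain rule together with classical $C^1$-dependence of ODE flows. The only minor point you gloss over is that the decomposition of $\eap^{tX_w}$ uses \emph{different} sign/order patterns for $t>0$ and $t<0$, so continuity at $t=0$ is not literally ``continuity of $t\mapsto\pm|t|^{1/|w|}$'' in a single formula but rather the observation that both one-sided chain-rule products tend to $I_n$ as $t\to 0$; the paper makes this explicit, and your argument is trivially completed in the same way.
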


\begin{proof}
 Observe first that, since each   $Z\in \pm \H$ is $C^1_\Eucl$,    by
the Gronwall inequality we have
\begin{equation}\label{baio}
 \abs{e^{\t Z}y - e^{ \t_0 Z}y_0} \le  C\big(\abs{y- y_0} +
\abs{\t-\t_0}\big)\quad\text{for all $y,y_0\in \Omega\quad
\abs{\t},\abs{\t_0}\le C^{-1}$. }
\end{equation}

Next, assume first that  $t\ge t^*\ge 0$. Write $\eap^{t X_w}x= e^{\t Z_1}\cdots
e^{\t Z_\nu}x$,  where $Z_1, \dots, Z_\nu\in \pm\H$ are suitable, see
\eqref{appsto},  and $\t = t^{1/\ell}$, with $\ell:=\abs{w}$. Then iterating \eqref{baio} we get
\begin{equation*}
\begin{aligned}
 \bigl|\eap^{t X_w}x - \eap^{t^* X_w}x^*\bigr|
&= \bigl|e^{\t Z_1}\cdots e^{\t
Z_\nu}x - e^{\t^* Z_1}\cdots e^{\t^*
Z_\nu}x^*\bigr|
\le C\bigl(\abs{x- x^*} + \abs{t - t^*}^{1/\ell} \bigr).
\end{aligned}
\end{equation*}
If instead $t>0>t^*$, then we get
\begin{align*}
 \bigl|\eap^{t X_w}x - \eap^{t^* X_w}x^*\bigr|
&\le \abs{\eap^{t X_w}x - x} +
\abs{ x ^*- \eap^{t^* X_w}x^*}+ \abs{x- x^*}
\\&\le
C\bigl(\abs{t}^{1/\ell} + \abs{t^*}^{1/\ell} + \abs{x- x^*}\bigr)
\le C\bigl(
\abs{t-t^*}^{1/\ell} +   \abs{x- x^*}\bigr).
\end{align*}
This shows \eqref{galileo} for $p=1$. Iterating one gets the general case.

Next we prove   existence and continuity of the
derivative $F_{X_w}$. Assume first
 that $t\ge 0$ and decompose
$\eap^{t
X_w} x= e^{t^{1/\ell }Z_1}\cdots
e^{t^{1/\ell} Z_\nu}x$, where $\ell= \abs{w}$ and $Z_1, \dots,Z_\nu \in \pm\H$
are suitable. Euclidean regularity of the vector fields  
$Z_j$ implies that
the functions $(\t,y)\mapsto F_{Z_j}(\t, y):= \nabla_y e^{\t Z_j}y$ are continuous if $y\in \Omega$
and $\abs{\tau}$ is small.
Therefore,  the chain rule gives
\begin{align*}
F_{X_w}(t,x)&= \nabla_x \eap^{t X_w} (x)
\\& = F_{Z_1}(t^{1/\ell},
e^{t^{1/\ell}Z_2}\cdots e^{t^{1/\ell }Z_\nu}x) F_{Z_2}(t^{1/\ell},
 e^{t^{1/\ell}Z_3}\cdots  (x))
\cdots F_{Z_\nu}(t^{1/\ell} ,x).
\end{align*}
Thus $F_{X_w}\bigr|_{[0, C^{-1}]\times\Omega}$ is continuous.
Note that $F_{X_w}(0,x)= I_n$ for all $x$. An analogous
argument shows that $F_{X_w}\bigr|_{[-C^{-1},0]\times\Omega}$ is continuous and
concludes the proof.
\end{proof}

At this point we may deduce the following result. See \eqref{fivetwenty} for
notation on the map~$E$.
\begin{theorem}\label{deduciamo}
Let $\H$ be an $\A_s$ family. Let $x\in \Omega$ and let $r\in (0,r_0)$.
Fix $p\in\{1,\dots,q\}$ and $I\in \I(p, q)$. Then the function $E_{I,x,r}$ is $C^1$ smooth on
$B_\Eucl(C^{-1})$.
Moreover, for all $h\in B_\Eucl(C^{-1})$ and for any $k\in\{1,\dots,p\}$
we have $E_*(\p_{h_k})\in P_{E(h)}$ and we can write
\begin{equation}\label{nhb}
E_*(\p_{h_k}) = \wt U_{k,E(h)}  +\sum_{\ell_j=d_k+1}^{s} a^j_k(h)
\wt Y_{j, E(h)}
+ \sum_{i=1}^q  \omega_k^i(x,h)\wt Y_{i,E(h)},
\end{equation}
where, for some $C>1$ depending on $L_0$ and $C_0$ in \eqref{lipo} and
\eqref{artefatto}, we have
\begin{align}\label{sogliola}
       \abs{a_k^j (h)}& \le C \bigl\|h \bigr\|_I^{\ell_j-d_k}\quad\text{for all
$h\in B_\Eucl(C^{-1})$}
\\ \label{merluzzo}  \abs{\omega_i(x,h)} &\le C \bigl\|h \bigr\|_I^{s+1-  d_k}
\quad\text{for all $h\in B_\Eucl(C^{-1})\quad x\in\Omega$}.
\end{align}
\end{theorem}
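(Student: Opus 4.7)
The plan is to derive \eqref{nhb} as an equation of tangent vectors at $E(h)$ by combining Theorem \ref{dascalare} (pointwise bracket expansion of the $t$-derivative of $\psi\circ\expap(tY)$) with a transport argument based on \eqref{jpn4} (Lobry-type transport of tangent vectors through products of elementary flows into the span of the $\wt Y_i$'s at the target point); the $C^1_\Eucl$ regularity of $E$ then follows from continuity of all ingredients in $h$, relying on Lemma \ref{torrette} and assumption \eqref{sussmann}.

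First I fix $k$, freeze the other $h_j$'s, and set $y:=\eap^{h_{k+1}\wt U_{k+1}}\cdots\eap^{h_p\wt U_p}x$, $\Phi_{<k}:=\eap^{h_1\wt U_1}\cdots\eap^{h_{k-1}\wt U_{k-1}}$, $z_k:=\eap^{h_k\wt U_k}y$, so that $E(h)=\Phi_{<k}(z_k)$. Writing $\eap^{h_k\wt U_k}=C_\tau(X_{w_1},\dots,X_{w_{d_k}})$ with $\tau=h_k^{1/d_k}r$ and $w$ the word of $Y_{i_k}$, I apply Theorem \ref{dascalare} to the test function $\psi':=\psi\circ\Phi_{<k}$. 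The chain rule $\frac{d}{dh_k}=\frac{r}{d_k}h_k^{1/d_k-1}\frac{d}{d\tau}$ combined with the scalings $\wt X_v=r^{|v|}X_v$ produces
\begin{equation*}
 \p_{h_k}(\psi\circ E)(h)=\wt U_k\psi'(z_k)+\sum_{|v|=d_k+1}^s\alpha_v(h_k,y)\,\wt X_v\psi'(z_k)+\|h\|_I^{s+1-d_k}\sum_{1\le |u|\le s}\beta_u(h_k,y)\,\wt X_u\psi'(z_k),
\end{equation*}
with $|\alpha_v|\le C\|h\|_I^{|v|-d_k}$ and $|\beta_u|\le C$.

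Next I push each $\wt X\psi'(z_k)$ through $\Phi_{<k}$ to rewrite it at the final point $E(h)$. Since $\Phi_{<k}$ is a composition of at most $N_0$ elementary flows $e^{\sigma\wt Z}$ with total parameter-length $\le C\|h\|_I$, an iterated length-by-length application of \eqref{jpn4} in its scaled form (via \eqref{artusi}) yields, for each commutator $\wt X_v$ of length $|v|$,
\[
 \wt X_v\psi'(z_k)=\wt X_v\psi(E(h))+\sum_{\substack{\ell_i\ge |v|\\ \ell_i\le s}}O(\|h\|_I^{\ell_i-|v|})\,\wt Y_i\psi(E(h))+\|h\|_I^{s+1-|v|}\sum_{i=1}^q O(1)\,\wt Y_i\psi(E(h)),
\]
the key being that each transport step increases the commutator length by $1$ at a cost $O(\|h\|_I)$, and once the accumulated length exceeds $s$ the $s$-involutivity \eqref{arte} is invoked to re-express the correction as a bounded combination of $\wt Y_i$'s. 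Substituting into the Step~1 identity and collecting by length: the leading summand gives $\wt U_k\psi(E(h))$; the $\alpha_v$-terms together with their length-conserving Step~2 contributions give $a_k^j(h)\wt Y_j\psi(E(h))$ for $\ell_j\in\{d_k+1,\dots,s\}$ satisfying \eqref{sogliola}; and all remaining contributions (the Step~1 remainder and the Step~2 length-exceeding cross-terms) pile into $\omega_k^i(x,h)\wt Y_i\psi(E(h))$ satisfying \eqref{merluzzo}. As this identity holds for every $\psi\in C^1_\Eucl$, it establishes \eqref{nhb} and in particular $E_*(\p_{h_k})\in P_{E(h)}$.

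Finally, continuity of $h\mapsto E(h)$ is Lemma \ref{torrette}; each coefficient $a_k^j(h)$ and $\omega_k^i(x,h)$ is continuous in $h$ because it is built from iterated integrals of continuous data (the derivatives $F_{X_w}$ of Lemma \ref{torrette} and bounded measurable structure coefficients from \eqref{jpn4}); and $h\mapsto \wt Y_{i,E(h)}$ is continuous by assumption \eqref{sussmann} in Definition \ref{laint}, the sole place in the paper where that hypothesis is used. Thus $\p_{h_k}E$ is continuous in $h$ and $E\in C^1_\Eucl(B_\Eucl(C^{-1}))$. The main obstacle I expect is the refined bookkeeping in the transport step: the elementary formula \eqref{jpn4} gives crude corrections of order $\|h\|_I$, whereas producing the required decay $\|h\|_I^{s+1-|v|}$ for contributions to $\wt Y_i$ with $\ell_i<|v|$ forces iterating \eqref{jpn4} length-by-length and invoking the $s$-involutivity \eqref{arte} at the final step to close the estimate.
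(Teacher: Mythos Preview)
Your derivation of the expansion \eqref{nhb} follows the paper's own route: apply Theorem~\ref{dascalare} to the test function $\psi\circ\Phi_{<k}$ to expand the $h_k$-derivative at the intermediate point $z_k$, then transport each resulting commutator through $\Phi_{<k}$ to the target $E(h)$ using the length-graded Taylor formula with $\ad$-operators and the pointwise remainders (what you call ``iterating \eqref{jpn4} length-by-length''). The paper carries this out explicitly in its Step~3, and your outline matches it.

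The genuine gap is in your $C^1$ argument. You claim that each coefficient $a_k^j(h)$ and $\omega_k^i(x,h)$ is continuous in $h$ and then read off continuity of $\p_{h_k}E$ from the expansion. This does not work. First, the remainder coefficients produced by Theorem~\ref{dascalare} are \emph{not} continuous across $h_k=0$: the map $\eap^{h_k U_k}$ is built from different compositions of elementary flows for $h_k>0$ and $h_k<0$, and the paper notes explicitly (in its $p=1$ step) that ``the functions $b_i(x,h)$ can be discontinuous, if we pass from $h>0$ to $h<0$''. Second, the decomposition in \eqref{nhb} is not unique, since the $\wt Y_i$ are in general linearly dependent; so even continuity of the full right-hand side would not give a canonical continuous choice of the individual $\omega_k^i$. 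The paper therefore proceeds in the opposite order: it proves $C^1$ \emph{first} (Step~2), by decomposing $\eap^{h_k U_k}$ into elementary flows for $h_k\neq 0$ and invoking standard ODE regularity together with Lemma~\ref{torrette}, handling $h_k=0$ via l'H\^opital and the crude bound $|\p_{h_k}E-U_k(E(h))|\le C|h_k|^{1/d_k}$ combined with \eqref{sussmann}, and treating the remaining variables by the chain rule \eqref{cksh}. Only afterwards (Step~3) does it establish the expansion \eqref{nhb}, which is never used to prove regularity.
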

\begin{proof}
For notational simplicity we delete everywhere the tilde. In fact,  the
statement holds uniformly in
$r\in (0, r_0)$, where $r_0$ depends on the already mentioned constants $L_0$
and $C_0$.

\step{Step 1.} We first prove the theorem for $p=1$.
 Using the definition of $\expap$ and Theorem
\ref{dascalare}, we
easily obtain by a change of variable
that for any commutator
$Y$ of length $\ell\in\{1,\dots,s\}$ and for all $\psi\in C^1_\Eucl$,
 \begin{equation}\begin{aligned} \label{daeo}
  \frac{d}{dh}\psi (\eap^{h   Y }(x))& =   Y\psi(\eap^{h   Y}(x))+
   \sum_{\ell_j=\ell+1}^s \a_j(h)   Y_k\psi(\eap^{h   Y}x)
 \\& \qquad
+ \abs{h}^{(s+1-\ell)/\ell} \sum_{i=1}^q
 b_i(x,h)  Y_i\psi(\eap^{h  Y}x),
 \end{aligned}\end{equation}
for all $x\in K$ and $0<\abs{h}\le C^{-1} $,
where the sum is empty if $\ell=s$. If $\ell< s$, then  $\a_j(h)= \ell^{-1} a_j
h^{(\ell_j-\ell)/\ell}$ if $h>0$,  while
$\a_j(h)=- \ell^{-1}\ol a_j
h^{(\ell_j-\ell)/\ell}$ if $h<0$.
The functions $a_j$ come from the statement of Theorem \ref{dascalare}.
 The functions $b_i(x,h)$ can be discontinuous,
if we pass from $h>0$ to $h<0$, but  we have estimate $\abs{b_i(x,h)}\le C$
uniformly in $x,h$.

To complete Step 1, we need to show that  the function
$h\mapsto\frac{d}{dh} \eap^{h Y}z$ is
continuous
for all fixed
$z\in \Omega$. Continuity at any  $h\neq 0$ (say $h>0$) follows immediately from the
decomposition
$\eap^{h  Y} = e^{h^{1/\ell}  Z_1}\cdots e^{h^{1/\ell}  Z_\nu}
$, where $Z_j\in\pm\H$. We show now  continuity at $h=0$. Formula \eqref{daeo}
gives
$ \Bigl| \frac{\p}{\p h}\eap^{h   Y}z -   g(\eap^{h   Y}z) \Bigr|\le
C\abs{h}^{1/\ell}$ (recall notation $Y=:g\cdot\nabla$).
 Therefore, using the l'H\^opital's rule,  we get
\begin{equation*}
\begin{aligned}
\frac{d}{d h}\eap^{h  Y}z\bigr|_{h=0} & := \lim_{h\to 0}\frac{\eap^{h  Y}z -
z }{h}
=\lim_{h\to 0} g(\eap^{h Y}z)+
O(\abs{h}^{1/\ell})
= g(z),
\end{aligned}
\end{equation*}
where we need  the $d$-continuity of $  g$. This shows existence of the
derivative at $h=0$. To see continuity, just let
$h\to 0$ in \eqref{daeo}.

\step{Step 2.}  By induction on $p$, we show that $E$ is $C^1$ smooth.
Assume that  $(h_1, \dots, h_{p-1})\mapsto 
\eap^{h_1 U_1}\cdots \eap^{h_{p-1}U_{p-1}}(x)$ is $C^1$ for all choice of $U_1, \dots, U_{p-1}$. We need to show that $(h_1, \dots, h_{p})\mapsto 
\eap^{h_1 U_1}\cdots \eap^{h_{p}U_{p}}(x)$ is $C^1$ smooth. 

Let $U_1, \dots, U_p\in \P$. First of all 
we show that   the map  $(h_1, \dots, h_p)\mapsto E_*(\p_{h_1})$ is
continuous.
   If $h_1\neq 0$, say $h_1>0$,
 then we decompose for suitable $Z_1, \dots, Z_\mu\in \H$,
\begin{equation*}
 \eap^{h_1 U_1} \cdots \eap^{h_p U_p}x = e^{ h_1^{1/d_1}Z_1}
\cdots  e^{ {h_1}^{1/d_1}Z_\mu}
\eap^{h_2 U_2}\cdots\eap^{h_p U_p}x.
\end{equation*}
Note that by standard ODE theory, the map $(\t_1, \dots,\t_\mu,z )\mapsto
e^{\t_1 Z_1}\cdots e^{\t_\mu Z_\mu}z$ is $C^1$.
Therefore, by means of  Lemma \ref{torrette}, we have existence and continuity
of $\p_1 E(h)= E_*(\p_{h_1})$ at any point
 of the form $h= (h_1, h_2, \dots, h_p)$ with $h_1\neq 0$.

To discuss the case $h_1=0$, recall that  formula \eqref{daeo}
gives
\begin{equation*}
\begin{aligned}
& \Bigl| \frac{\p}{\p h_1}\eap^{h_1 U_1}\cdots\eap^{h_p U_p}x -U_1(\eap^{h_1
U_1}\cdots\eap^{h_p U_p}x)\Bigr|
\le C\abs{h_1}^{1/d_1}.
\end{aligned}
\end{equation*}
 Therefore, using de~l'H\^opital's rule, for all $h=(0,h_2, \dots, h_p)=:(0,\wh
h_1)$, we get
\begin{equation*}
\begin{aligned}
\p_1E(0,\wh h_1)
&:=\lim_{h_1\to 0}\frac{\eap^{h_1 U_1}\eap^{h_2
U_2}\cdots\eap^{h_p U_p}x-\eap^{h_2 U_2}\cdots\eap^{h_p U_p}x}{h_1}
\\&=\lim_{h_1\to 0}U_1(\eap^{h_1 U_1}\eap^{h_2
U_2}\cdots\eap^{h_p U_p}x)+
O(\abs{h_1}^{1/d_1})
= U_1(E(0, \wh h_1)),
\end{aligned}
\end{equation*}
where we need  the $d$-continuity of $U_1$.
This shows  existence of $\p_{1}E(0, \wh h_1)$.

To show continuity  of $\p_{h_1}E$ at  $h^* = (0, \wh h_1^*)\in
B_\Eucl(C^{-1})$,
write by expansion \eqref{daeo}
\begin{equation}
\label{sosso} 
\begin{aligned}
 \bigl| \p_1E(h_1, \wh h_1)  & - \p_1E(0,\wh h_1^*)\bigr|
\\
&=
\Bigl|U_1(E(h_1, \wh h_1))+\sum_{d_1 +1\le \ell_j\le s}
\a_j(h_1)Y_j(E(h_1, \wh h_1))
\\&\quad
+
\abs{h_1}^{(s+1-d_1)/d_1}
\sum_{1\le i\le q}  b_i Y_i(E(h_1, \wh h_1))
- U_1(E(0, \wh h_1^*))\Bigr|
\\&\le C\abs{h_1}^{1/d_1}+ \abs{U_1(E(h_1, \wh h_1))
-U_1(E(0, \wh h_1^*))}\to 0,
\end{aligned}
\end{equation}
as $(h_1, \wh h_1)\to (0, \wh h_1^*)$, here we used  assumption
\eqref{sussmann}   for $ U_1$.

To conclude \emph{Step 2}, we show the   continuity of $\p_{h_k} E$ for all $2\le k\le p$. Write
by the
chain rule
\begin{equation}\label{cksh} 
\begin{aligned}
 \frac{\p}{\p h_k} E(h) & = F_{U_1}(h_1, \eap^{h_2 U_2}\cdots(x))\cdots
F_{U_{k-1}}(h_{k-1}, \eap^{h_k U_k}\cdots(x))\frac{\p}{\p h_k}\eap^{h_k
U_k}\cdots(x).
\end{aligned}
\end{equation}
This ends the proof, because the right-hand side depends
continuosly on
$h_1, \dots, h_p$, by Lemma \ref{torrette} and the first part of \emph{Step 2}.

\step{Step 3.}
We show expansion \eqref{nhb} and estimates \eqref{sogliola} and
\eqref{merluzzo} for any $p$ and for all $k=1,\dots, p$.

Let $U_k = Y_{i_k}$,  $d_k:= \ell_{i_k}$ and
$E_{ \langle j,k\rangle  }(x):= \eap^{h_j  U_j }     \cdots
\eap^{h_k  U_k}(x)$ for all $1\le j\le k\le p$.
 We agree that $E_{\langle j,j-1\rangle } $ denotes the identity function.
Observe that the function
$z\mapsto E_{ \langle j,k\rangle  }(z) $ is a $C^1$ diffeomorphism for any
fixed $h_j, h_{j+1}, \dots, h_k$.
Then, for $k\in\{1,\dots,p\}$, we may use \eqref{daeo} and we get
\begin{equation}\label{baleno}
\begin{aligned}
E_*(\p_{h_k}) &= U_k
E_{\langle 1,k-1\rangle }(E_{\langle k,p\rangle}(x))
+\sum_{\ell_j = d_k  +1}^s\a_j(h_k)
 Y_{j}
E_{\langle 1,k-1\rangle }(E_{\langle k, p \rangle}(x))
\\&\quad+ \abs{h_k}^{(s+1-d_k)/d_k}
\sum_{i=1}^q  b_i  Y_i E_{\langle 1,k-1\rangle}(E_{\langle k,  p \rangle } (x)),
\end{aligned}
\end{equation}
where $b_i$ denote bounded functions and $\abs{\alpha_j(h_k)}\le
C\abs{h_k}^{(\ell_j - d_k)/d_k}$.

To get formula \eqref{nhb}, it suffices to use a rough expansion of  each term
as follows. Write for $\lambda\in\{1,\dots,p\}$ and
$h_\lambda >0$,
$\eap^{h_\lambda   U_\lambda} = e^{- h_\lambda^{1/d_\lambda}
Z_1}\cdots
 e^{-h_\lambda^{1/d_\lambda} Z_\nu}$, for suitable $Z_i\in \pm \H$.
Then for all $j\in \{1,\dots,q\}$  write
\begin{align*}
  Y_j(\psi \eap^{h_\lambda   U_\lambda})(z)
&=
   Y_j(\psi  e^{-{h_\lambda}^{1/d_\lambda}
Z_1}\cdots
 e^{-{h_\lambda}^{1/d_\lambda}  Z_\nu})(z)
\\&=    Y_j\psi (\eap^{h_\lambda   U_\lambda} z)+
\sum_{\abs{\a}=1
}^{s-\ell_j}\ad_{  Z_\nu}^{\a_\nu}\cdots\ad_{  Z_1}^{\a_1}  Y_j \psi
(\eap^{h_\lambda   U_\lambda} z)\frac{h_\lambda^{\abs{\a}/d_\lambda}}{\a!}
\\&\qquad
 + O_{s+1}(\abs{h_\lambda}^{(s+1-\ell_j)/d_\lambda}, \psi,
\eap^{h_\lambda   U_\lambda}z )
\\&=   Y_j\psi (\eap^{h_\lambda   U_\lambda} z)
+
\sum_{\ell_i= \ell_j + 1}^s c_i  \abs{h_\lambda}^{(\ell_i -
\ell_j)/d_\lambda}   Y_i\psi (\eap^{h_\lambda  U_\lambda}x)
\\&\qquad \qquad
+\abs{h_\lambda}^{(s+1-\ell_j)/d_\lambda}\sum_{i=1}^q b_i  Y_i\psi
 (\eap^{h_\lambda  U_\lambda}x),
\end{align*}
where we use the pointwise form of the remainder, see the proof of Theorem
\ref{dascalare}. Here $c_i$ are constants, while $b_i$ are bounded functions.
The proof of \eqref{nhb} follows from \eqref{baleno} via a repeated application
of this expansion.
If $h_\lambda<0$, then the terms $c_i$ and $b_i$ may change, but the argument
gives the same conclusion.
The proof of the theorem is concluded
\end{proof}

\begin{remark}\label{tredodici} $\,$\begin{itemize*} \item[(i)]    Let $X_w$ be a commutator of length $\abs{w}\le s$.
Define the function $
H(t,x):=\frac{d}{dt} \eap^{t X_w}(x).$
 Under our assumptions $\A_s$
we  may claim that $ H(t,x)$ exists  for all $(t,x)$.
 However, we can not expect  that the function
$(t,x)\mapsto H(t,x)$ is continuous in
$(-t_0, t_0)\times\Omega$.
Indeed, in order to show the
continuity of $H$ at a point $(0, \wt x)$, because 
\begin{equation*}
\begin{aligned}
\abs{H(t,x)- H(0,\wt x)}&\le\abs{H(t,x)-
H(0,x)}+\abs{H(0,x)- H(0, \wt x)}.
\\&=\bigl|\frac{d}{dt}\eap^{t X_w}x - f_w(x)\bigr|+\abs{f_w(x)- f_w(\wt x)}.
\end{aligned}
\end{equation*}
The first term  can be made   small   uniformly in $x$, if $\abs{t}$ is small.
In order to make the second term  small, we can use only assumption
\eqref{sussmann}, which does not ensure any continuity if    $x$ and $
\wt x$ belong to different orbits. 

\item[(ii)] Under our assumptions, we cannot expect that 
 maps $h\mapsto E_{I,x}(h)$ 
are   more than $C^1$. Indeed, the term $F_{U_1}(h_1, \eap^{h_2 U_2} \cdots \eap^{h_p U_p}x)$ in \eqref{cksh} depends continuously on 
$h_2, \dots, h_p$, if $\H$ is a $C^1$ family (recall that $F_{U_1}(h,x):=\nabla \eap^{h U_1}(\xi)$ is only continuous in $\xi$). An inspection of the proof above shows that if $\H$ is a $C^2$ family and $\A_s$ holds, then $E_{I,x}\in C^{1, 1/s}_\loc$, but this regularity cannot be improved, even if $X_j\in C^{\infty}$ or $ C^\omega$; see \cite[Example~5.7]{MM}.
\end{itemize*} \end{remark}

Now we can easily prove the regularity of orbits, along
the lines of the proof in
\cite{Agrachev}.
 \begin{theorem}[Regularity of $\A_s$ orbits]\label{frr}
Let $\H$ be a  system   of  $\A_s$   vector fields. Then  each orbit    $\O$
with the   topology $\t_d$ is a connected 
 $C^1$ smooth immersed  submanifold of $\R^n$ satisfying 
  $T_x\O= P_x:=\Span\{X_w(x):1\le \abs{w}\le s\}$
 for all $x\in \O$.  
\end{theorem}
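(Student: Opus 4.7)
The plan is to construct, for each $x_0 \in \O$, a $\t_d$-open neighborhood of $x_0$ in $\O$ realized as a $C^1$ graph in $\R^n$ with tangent space $P_y$ at every point $y$. Fix $x_0$ and set $p := p_{x_0}$, which by Proposition~\ref{valida} equals $p_y$ for every $y \in \O$. Choose $I \in \I(p,q)$ and $r \in (0, r_0]$ so that $(I, x_0, r)$ is $\eta$-maximal and consider the chart $E := E_{I, x_0, r}$ on a Euclidean ball $B \subset \R^p$ around the origin. By Theorem~\ref{deduciamo}, $E \in C^1_\Eucl(B)$, and the expansion \eqref{nhb} together with the estimates \eqref{sogliola}--\eqref{merluzzo} gives $E_*(\p_{h_k})(0) = \wt U_k(x_0)$. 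These $p$ vectors are linearly independent by $\eta$-maximality, so after shrinking $B$ the vectors $\{E_*(\p_{h_k})(h)\}_{k=1}^{p}$ remain linearly independent for all $h \in B$; since each lies in $P_{E(h)}$ and $\dim P_{E(h)} = p$, they span $P_{E(h)}$.

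The $C^1$ graph structure then follows from the Euclidean inverse function theorem: $\eta$-maximality supplies $K \in \I(p, n)$ with $\wt Y_I^K(x_0) \neq 0$, and this determinant coincides with the Jacobian of $\pi_K \circ E$ at the origin, where $\pi_K : \R^n \to \R^K$ denotes the coordinate projection. After a further shrinking of $B$, $\pi_K \circ E$ is a $C^1_\Eucl$ diffeomorphism onto an open subset of $\R^K$, so $E(B)$ is a $C^1$ graph in $\R^n$ with $T_{E(h)} E(B) = \Span\{E_*(\p_{h_k})(h)\}_{k=1}^p = P_{E(h)}$.

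The crux is to verify that $E(B)$ is $\t_d$-open in $\O$. Clearly $E(B) \subset \O$, since every $\expap$ is a finite composition of $\pm \H$-flows. For the reverse inclusion of a $d$-neighborhood, we exploit that $\H \subset \P$: for each $y \in E(B)$ and each $j \in \{1, \dots, m\}$ we have $X_j(y) \in P_y = T_y E(B)$, so the $C^1_\Eucl$ vector fields $X_j$ are tangent to the $C^1$ submanifold $E(B)$. Uniqueness of ODE solutions then implies that starting from any $y$ in a fixed compact $K \Subset E(B)$, the flow $e^{t X_j} y$ stays in $E(B)$ for a uniform small time $t_0 > 0$. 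A standard iteration argument yields $\delta_0 > 0$ such that any concatenation $e^{t_1 Z_1} \cdots e^{t_\mu Z_\mu} x_0$ with $Z_j \in \delta \H$, $\sum_j |t_j| \le 1$, $\delta \le \delta_0$, stays inside $E(B)$. This gives $B_d(x_0, \delta_0) \cap \O \subset E(B)$, hence $\t_d$-openness. Finally, $\O$ is path-connected by $\H$-flows, which are $d$-continuous by \eqref{galileo}, so $(\O, \t_d)$ is connected.

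The principal obstacle is this last step: controlling $\H$-concatenations whose length is unbounded in $\mu$. The resolution exploits that $\H \subset \P$, so the $\H$-vector fields are tangent to the candidate chart $E(B)$, making the chart locally invariant under $\H$-flows; no matter how many short steps are taken, the endpoint remains in the chart.
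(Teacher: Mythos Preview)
Your proof is correct and essentially matches the paper's: both use the maps $E_{I,x}$ as local charts via Theorem~\ref{deduciamo}, and both establish $\t_d$-openness of the chart images by observing that each $X_j$ is tangent to the chart so that $\H$-flows preserve it---the paper cites Bony's theorem for this step, while your tangency-plus-ODE-uniqueness argument is exactly its content. The only difference is organizational: the paper packages the charts into a topology base $\t(\U)$ and shows $\t(\U)=\t_d$, whereas you verify directly that each $E(B)$ is $\t_d$-open, which amounts to the same thing.
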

\begin{proof}  Let $x_0\in \R^n$ and let  $\O:= \O_\H^{x_0}$ be its $\H$-orbit.
We know from Remark \ref{osservo} that $\dim P_x = \dim P_{x_0} =:p$ is constant
in $\O$. For each $x\in \O$ choose $I\in \I(p, q)$ such that $\abs{Y_I(x)} \neq
0 $. By Theorem \ref{deduciamo} and by the implicit function theorem, we may
claim
that for a suitable $ O_{I,x}\subset\R^p$,  open neighborhood of the
origin, the map $E_{I,x}:O_{I,x}\to\R^n$   is  a $C^1$
full-rank map which parametrizes a $C^1$ smooth, $p$-dimensional embedded
submanifold   $E_{I,x}(O_{I,x})\subset \R^n$.
Note also that 
 $E_{I,x}(O_{I,x})\subset\O$ and,
by Theorem~\ref{deduciamo}, $T_{E_{I,x}(h)} E_{I,x}(O_{I,x}) = P_{E_{I,x}(h)}$,
for all $h\in O_{I,x}$.
Let
\[ \begin{aligned} 
\U &:=\{ E_{I, x}(O):  x\in \O, I\in
\I(p,q),\abs{Y_I(x)}\neq 0   
\\ &\qquad \qquad \text{and  $O\subset O_{I, x}$ is a  open
neighborhood
of the origin$\}.$}
\end{aligned}\]
We claim that the family $\U$ can be used as a base for a topology   $\t(\cal{U})$   on $\O$.
To see that, we need to show that if the intersection of the $p$-dimensional submanifolds
$E_{I,x}(O)$ and $ E_{I',x'}(O')$ is
nonempty, then it contains  a small manifold  of the form
$E_{I'',x''}(O'')$, if $O''$
is a sufficiently small neighborhood of the origin.
Let $\Sigma:=E_{I,x}(O)$ and 
$\Sigma'= E_{I',x'}(O')$ and let $x''\in\Sigma\cap\Sigma'$. Recall that both $\Sigma$ and $\Sigma'$
are embedded $C^1$ submanifolds of $\R^n$. Let $I''\in \I(p,q)$ be such that $\abs{Y_{I''}(x'')}\neq0$.
Let $O''\subset\R^p$ be a small open neighborhood of the origin. For any $h\in O''$, the point $E_{I'',x''}
(h) $ can be written as $ e^{\t_1Z_1}\cdots e^{\t_\nu Z_\nu}x$ where $Z_j\in \pm\H$ and $\sum_j\abs{\t_j}\le C\|h\|_I$.
By a repeated application of   Bony's theorem \cite[Theorem 2.1]{Bony69}, it follows that $E(h)\in\Sigma$, provided that $h$ is sufficiently close to the origin. The same argument applies to $\Sigma'$.
Thus we have proved that $\U$ can be used as a topology base.

A similar argument   shows that any submanifold of the form $E_{I, x}(O)\in\U$ 
contains a small ball $B_d(x, \sigma)$. Therefore $\t_d$ is stronger than $\t(\U)$. The fact that $\t(\U)$ is 
stronger that $\t_d$  follows easily from estimate $d(E_{I,x}(h),x)\le C\|h\|_I.$ 
Finally, since all paths of the form $t\mapsto e^{tZ}x\in(\O,\t(\U))=(\O,\t_d)$ are continuous,   the orbit is connected.

The $C^1$ differential
structure on $\O$ is given by the family maps $E_{I,x}\bigr|_{O}$ where $x\in
\O$, $I\in\I(p_x, q)$ is such that $\abs{Y_I(x)}\neq 0$ and $O\subset O_{I,x}$ is an  open neighborhood of the origin.
\end{proof}

\begin{example}
      \label{esempietto} Let us consider in $\R^3$
the    family $\H=\{X_1, X_2, X_3\}$: 
\[X_1= a(t)\p_x\quad X_2 = x a(t)\p_y \quad\text{and} \quad X_3 = t\p_t,\]
where the function  $a$ satisfies $a(t) = 1+t^3 \sin\big(\frac 1t\big)$, if  $0<\abs{t}<1$,
$a(0)=0$, 
$a\in C^\infty(\R\setminus\{0\})$ and $\inf_\R a>0$.
Note that   $X_j\in C^1_\Eucl(\R^3)$ and 
\[
      [X_1, X_2]= a(t)^2 \p_y,\quad [X_1, X_3] = - t a'(t) \p_{x}\quad\text{and}\quad
[X_2, X_3] = -t a'(t) x\p_{y}.
\]
If $0<
\abs{t}<1$, then 
\[
 \frac{d}{dt} (ta'(t)) = \frac{d}{dt}\Big(3t^3 \sin \frac 1t  - t^2\cos
\frac 1t\Big)
=9t^2 \sin\frac 1t - 5t\cos\frac 1t - \sin\frac{1}{t}
\]
is discontinuous at $t=0$. Therefore $X_{13}$ and $X_{23}\notin C^1_\Eucl$ and the $C^1$ singular Frobenius theorem does not apply to the family $\P=\{ X_1, X_2, X_3, [X_1, X_2], [X_1, X_3], [X_2, X_3]\}$.

However,  
we claim that the family $\H$ belongs to  our class  $\A_2$. 
To show this claim, we first prove that $X_j\in C^{1,1}_{\H,\loc}$ . To see that,  
 it suffices to show that $X_3^\sharp X_3^\sharp a \in C^0_\H$. But, if $0<\abs{t}<1$, we have
\begin{equation}\label{continuous} 
 X_3^\sharp
 X_3^\sharp a(t)= t\p_t(ta'(t)) = 9t^3 \sin\frac 1t - 5 t^2\cos\frac 1 t - t\sin\frac 1t,
\end{equation} 
which is a continuous function up to $t=0$ (note that, since  $ X_3^\sharp a(0)=0$, we have 
$X_3^\sharp X_3^\sharp a(0) = \lim_{t\to 0}t^{-1}(X_3^\sharp a (e^{t X_3}(0)) - X_3^\sharp a(0) )= 0$).
Since $X_{12},X_{13}$ and $X_{23}\in C^0_\Eucl$, condition 
 \eqref{sussmann} is fulfilled.

Finally, we have to check the $2$-involutivity, i.e.~that 
for all $i,j,k$ 
we can write
$\ad_{X_i}X_{jk}= \sum_{\abs{w}\le 2}b^w X_w$
with $b^w$ locally bounded.
A computation shows that the nonzero terms are the following (we work with $0<\abs{t}<1$)
\begin{align*}
 -\ad_{X_1}X_{23}&=  \ad_{X_2}X_{13}
  =\frac 12 \ad_{X_3}X_{12}= ta(t)a'(t)\p_y = \frac{ta'(t)}{a(t)}X_{12} 
\\   \ad_{X_3}X_{13} & =-t\p_t(ta'(t))\p_x =\frac{-t\p_t(ta'(t))}{a(t)}X_1
 \\ \ad_{X_3} X_{23}&= -xt\p_t(ta'(t))\p_y =  \frac{-t\p_t(ta'(t))}{a(t)}X_2.
\end{align*}
Since $\inf_{\R} a>0$, one can see with the help of \eqref{continuous}
that both the coefficients $ta'(t)/a(t)$ and $-t\p_t(ta'(t))/a(t)$ are locally bounded.
Thus, hypothesis $\A_2$ is fulfilled and our main theorem applies.

Note finally that it is very easy to see that there are three orbits
of the family $\H$. 
Namely, $\O_1:= \{(x,y,t):t>0\}$, $\O_2= \{t=0\}$ and $\O_3 = \{t<0\}$ and they are  integral manifolds of the distribution generated by the family $\P.$ 
\end{example}

\begin{remark}
      \label{finalremark} 
A natural   question  concerns sharpness  
of the $C^1$ regularity
of $\O_\H$. It is reasonable to  guess that $C^1$ regularity is not sharp.
Actually, we do not have any example of vector fields of class $\A_s$ where the integral manifolds $\O_\H$ are less than $C^2$. 
However, under our assumptions, maps $E_{I,x }$ cannot provide more than $C^1$ regularity, see Remark~\ref{tredodici}-(ii).

A related issue concerns  the   regularity of the orbit $\O_\H$ of a generic family of $C^1$ (or even
 Lipschitz-continuous)
 vector fields which do not satisfy any involutivity   assumptions.   
This would require a careful discussion of a nonsmooth version of 
Sussmann's orbit theorem. 

We plan to discuss such questions in a future study.
\end{remark}

\appendix
\section{Appendix}
\label{bio}

Here we prove the multilinear algebra lemma which   has been used in the proof of Lemma \ref{lili}.
The same formula is proved by  \cite[Lemma 3.6]{Street}, but  here we exploit a
slightly different argument,   which does not rely   on the
formalism of  Lie derivatives.
\begin{lemma}[Linear algebra]\label{ollo}
 Let $p\le n$  and let $U_1,\dots,U_p$ be constant vector fields in $\R^n$.
 Let $Z= \sum_{\b=1}^n f^\b\p_\b\in C^1_\Eucl$.
Then, for any $( k_1, \dots, k_p)\in \I(p, n)$,
\begin{equation}\label{olletto}
\begin{aligned}
&\sum_{\a=1}^p dx^{k_1}  \wedge\cdots  dx^{k_p} \Big(U_1, \dots,
U_{\a-1}, \sum_{\b=1}^n U_\a f^\b  \p_\b ,U_{\a+1}, \dots, U_p \Big)
\\& =   \sum_{\g=1}^n \sum_{\b=1}^p \p_\g f^{k_\b}
dx^{(k_1,\dots, k_{\b-1})}  \w  dx^\g  \wedge
dx^{(k_{\b+1},\dots, k_p)}(U_1, \dots, U_p).
\end{aligned}
\end{equation}
\end{lemma}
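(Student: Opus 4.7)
The plan is to recognize the identity as two different expansions of the same matrix determinant, obtained by cofactor expansion along a column (left-hand side) versus along a row (right-hand side). No Lie derivative formalism is needed; only the multilinearity and antisymmetry of the determinant.

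First I would introduce the matrix $M=(U_j^{k_i})_{i,j=1}^p$, where $U_j^\gamma$ denotes the $\gamma$-th coordinate of the constant vector field $U_j$. Then
\[
 dx^{k_1}\w\cdots\w dx^{k_p}(U_1,\dots,U_p)=\det M,
\]
and since $U_\alpha f^{k_i}=\sum_{\gamma}U_\alpha^\gamma \partial_\gamma f^{k_i}$, the $\alpha$-th summand of the LHS of \eqref{olletto} equals the determinant $\det M^{(\alpha)}$, where $M^{(\alpha)}$ is obtained from $M$ by replacing its $\alpha$-th column with $\bigl(\sum_\gamma U_\alpha^\gamma \partial_\gamma f^{k_i}\bigr)_{i=1}^p$.

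Next I would apply the Laplace cofactor expansion along the $\alpha$-th column of $M^{(\alpha)}$. Denoting by $\widehat M_{\alpha\beta}$ the $(p-1)\times (p-1)$ minor obtained by deleting row $\beta$ and column $\alpha$ from $M$, this yields
\[
 \det M^{(\alpha)}=\sum_{\beta=1}^p\sum_{\gamma=1}^n (-1)^{\alpha+\beta} U_\alpha^\gamma\,\partial_\gamma f^{k_\beta}\,\det\widehat M_{\alpha\beta}.
\]
Summing over $\alpha$ gives LHS as a triple sum over $\alpha,\beta,\gamma$.

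For the RHS, I would observe that $dx^{(k_1,\dots,k_{\beta-1},\gamma,k_{\beta+1},\dots,k_p)}(U_1,\dots,U_p)$ equals the determinant of the matrix $N^{(\beta,\gamma)}$ obtained from $M$ by replacing its $\beta$-th row with $(U_1^\gamma,\dots,U_p^\gamma)$. Expanding this determinant along the $\beta$-th row and noting that deleting row $\beta$ and column $\alpha$ from $N^{(\beta,\gamma)}$ yields exactly the minor $\widehat M_{\alpha\beta}$ of $M$, one obtains
\[
 \det N^{(\beta,\gamma)}=\sum_{\alpha=1}^p (-1)^{\alpha+\beta} U_\alpha^\gamma \det\widehat M_{\alpha\beta}.
\]
Multiplying by $\partial_\gamma f^{k_\beta}$ and summing over $\beta,\gamma$ reproduces the same triple sum as the LHS, which concludes the proof.

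There is no real technical obstacle: the identity is pure linear algebra, and the only care needed is bookkeeping of the signs $(-1)^{\alpha+\beta}$ in the two cofactor expansions, which automatically agree because the minor $\widehat M_{\alpha\beta}$ is the same object whether one reaches it by removing a column first and then a row, or vice versa.
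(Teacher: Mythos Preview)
Your argument is correct and is essentially the same as the paper's: both reduce the identity to the Laplace cofactor expansion of the $p\times p$ matrix $M=(U_j^{k_i})$. The paper organizes it one-directionally, first observing that only the components $\beta\in\{k_1,\dots,k_p\}$ of $\sum_\beta U_\alpha f^\beta\partial_\beta$ survive, then invoking the adjugate identity $\sum_\mu V_\mu^\sigma(\cof V)_\mu^\rho=(\det V)\delta_{\sigma\rho}$ to pass to the row-replaced determinant; your version expands both sides separately via Laplace and meets in the common triple sum $\sum_{\alpha,\beta,\gamma}(-1)^{\alpha+\beta}U_\alpha^\gamma\,\partial_\gamma f^{k_\beta}\det\widehat M_{\alpha\beta}$, which is a slightly more symmetric bookkeeping of the same computation.
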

Note that in the particular case $p=n$,  the right-hand side is
$\Div (f)\det[U_1, \dots, U_n]$.
\begin{proof}
 Recall first that if we are given  $(V_\a^\b)_{\a,\b }\in \R^{p\times p}$, then
the matrix 
$(\cof V)_\a^\b := \det[V_1, \dots, V_{\a-1}, \p_\b, V_{\a+1}, \dots]$
satisfies
\begin{equation}\label{ladro}
 \sum_{\mu=1}^p V_{\mu}^\sigma (\cof V)_\mu^\rho  = (\det V )\delta_{\sigma\rho}
\end{equation}

To prove the lemma,
observe first that $dx^{k_\mu}(\p_\b)=0$ if $\mu\in \{1,\dots,p\}$ and
 $\b\notin
\{k_1, \dots, k_p\}$. Therefore the left-hand side of \eqref{olletto} takes the
form
\begin{align*}
  \sum_{\a=1}^p  & dx^{k_1}  \wedge dx^{k_p} \Big(U_1, \dots,
U_{\a-1}, \sum_{\b=1}^p U_\a f^{k_\b} \p_{k_\b},U_{\a+1}, \dots, U_p \Big)
\\&=  \sum_{\substack{\a,\b =1,\dots, p\\
\g=1,\dots, n}}
U_\a^\g \p_\g f^{k_\b}
dx^{k_1}  \wedge dx^{k_p} \big(U_1, \dots, U_{\a-1},     \p_{k_\b},U_{\a+1},
\dots, U_p \big)
\\&=
   \sum_{\substack{\b =1,\dots, p\\
\g=1,\dots, n}}
 \p_\g f^{k_\b}\sum_{\a=1}^p   U_\a^\g
\cof
\left[
\begin{smallmatrix}
  U_1^{k_1}& \dots & U_{p}^{k_1}\\
 \vdots & \vdots& \vdots\\
U_1^{k_p}& \dots & U_{p}^{k_p}
\end{smallmatrix}
\right]_\a^{\b}
\stackrel{\eqref{ladro}}{=}
\sum_{\substack{\b =1,\dots, p\\
\g=1,\dots, n}}    \p_\g f^{k_\b} \det
 \left[\begin{smallmatrix}
 U_1^{k_1}& \dots & U_{p}^{k_1}\\
 \vdots & \vdots& \vdots
\\  U_1^{k_{\b-1}}&\cdots  & U_{p}^{k_{\b-1}}
\\ U_1^\g &\cdots & U_p^\g
\\  U_1^{k_{\b+1}}&\cdots  & U_{p}^{k_{\b+1}}
\\
 \vdots & \vdots& \vdots
\\   U_1^{k_{p}}&\cdots  & U_{p}^{k_{p}}
\end{smallmatrix}\right]
 \\&= \sum_{\substack{\b =1,\dots, p\\
\g=1,\dots, n}}
\p_\g f^{k_\b} dx^{(k_1,\dots,k_{\b-1})} \w  dx^\g  \wedge
dx^{(k_{\b+1},\dots,k_p)}(U_1, \dots, U_p),
\end{align*}
as desired.
\end{proof}

\footnotesize

 \phantomsection
\addcontentsline{toc}{section}{References}  
\def\cprime{$'$}
\providecommand{\bysame}{\leavevmode\hbox to3em{\hrulefill}\thinspace}
\providecommand{\MR}{\relax\ifhmode\unskip\space\fi MR }
\providecommand{\MRhref}[2]{%
  \href{http://www.ams.org/mathscinet-getitem?mr=#1}{#2}
}
\providecommand{\href}[2]{#2}


\begin{thebibliography}{CRTN01}

\bibitem[AS04]{Agrachev}
Andrei~A. Agrachev and Yuri~L. Sachkov, \emph{Control theory from the geometric
  viewpoint}, Encyclopaedia of Mathematical Sciences, vol.~87, Springer-Verlag,
  Berlin, 2004, Control Theory and Optimization, II. \MR{2062547 (2005b:93002)}

\bibitem[Bal94]{Balan}
R.~Balan, \emph{A note about integrability of distributions with
  singularities}, Boll. Un. Mat. Ital. A (7) \textbf{8} (1994), no.~3,
  335--344. \MR{1302957 (95k:58002)}

\bibitem[BCH08]{Berhanu}
Shiferaw Berhanu, Paulo~D. Cordaro, and Jorge Hounie, \emph{An introduction to
  involutive structures}, New Mathematical Monographs, vol.~6, Cambridge
  University Press, Cambridge, 2008. \MR{2397326 (2009b:32048)}

\bibitem[Bon69]{Bony69}
Jean-Michel Bony, \emph{Principe du maximum, in\'egalite de {H}arnack et
  unicit\'e du probl\`eme de {C}auchy pour les op\'erateurs elliptiques
  d\'eg\'en\'er\'es}, Ann. Inst. Fourier (Grenoble) \textbf{19} (1969),
  no.~fasc. 1, 277--304 xii. \MR{0262881 (41 \#7486)}

\bibitem[Che46]{Chevalley}
Claude Chevalley, \emph{Theory of {L}ie {G}roups. {I}}, Princeton Mathematical
  Series, vol. 8, Princeton University Press, Princeton, N. J., 1946.
  \MR{0015396 (7,412c)}

\bibitem[CRTN01]{Coulhon01}
Thierry Coulhon, Emmanuel Russ, and Val{\'e}rie Tardivel-Nachef, \emph{Sobolev
  algebras on {L}ie groups and {R}iemannian manifolds}, Amer. J. Math.
  \textbf{123} (2001), no.~2, 283--342. \MR{1828225 (2002g:43003)}

\bibitem[Dan91]{Danielli91}
D.~Danielli, \emph{A compact embedding theorem for a class of degenerate
  {S}obolev spaces}, Rend. Sem. Mat. Univ. Politec. Torino \textbf{49} (1991),
  no.~3, 399--420 (1993). \MR{1231060 (95a:46050)}

\bibitem[FF03]{FerrariFranchi03}
Fausto Ferrari and Bruno Franchi, \emph{A local doubling formula for the
  harmonic measure associated with subelliptic operators and applications},
  Comm. Partial Differential Equations \textbf{28} (2003), no.~1-2, 1--60.
  \MR{1974448 (2004g:35050)}

\bibitem[FL83]{FL}
B.~Franchi and E.~Lanconelli, \emph{Une m\'etrique associ\'ee \`a une classe
  d'op\'erateurs elliptiques d\'eg\'en\'er\'es}, Rend. Sem. Mat. Univ. Politec.
  Torino (1983), no.~Special Issue, 105--114 (1984), Conference on linear
  partial and pseudodifferential operators (Torino, 1982).

\bibitem[FP83]{FeffermanPhong81}
C.~Fefferman and D.~H. Phong, \emph{Subelliptic eigenvalue problems},
  Conference on harmonic analysis in honor of {A}ntoni {Z}ygmund, {V}ol. {I},
  {II} ({C}hicago, {I}ll., 1981), Wadsworth Math. Ser., Wadsworth, Belmont, CA,
  1983, pp.~590--606. \MR{730094 (86c:35112)}

\bibitem[Her62]{Hermann}
Robert Hermann, \emph{The differential geometry of foliations. {II}}, J. Math.
  Mech. \textbf{11} (1962), 303--315. \MR{0142131 (25 \#5524)}

\bibitem[KN96]{KobayashiNomizu}
Shoshichi Kobayashi and Katsumi Nomizu, \emph{Foundations of differential
  geometry. {V}ol. {I}}, Wiley Classics Library, John Wiley \& Sons Inc., New
  York, 1996, Reprint of the 1963 original, A Wiley-Interscience Publication.
  \MR{1393940 (97c:53001a)}

\bibitem[LM00]{LM}
Ermanno Lanconelli and Daniele Morbidelli, \emph{On the {P}oincar\'e inequality
  for vector fields}, Ark. Mat. \textbf{38} (2000), no.~2, 327--342.

\bibitem[Lob70]{Lobry70}
Claude Lobry, \emph{Contr\^olabilit\'e des syst\`emes non lin\'eaires}, SIAM J.
  Control \textbf{8} (1970), 573--605. \MR{0271979 (42 \#6860)}

\bibitem[Lob76]{Lobry76}
C.~Lobry, \emph{Erratum: ``{C}ontrolabilit\'e des syst\`emes non lin\'eaires''
  ({SIAM} {J}. {C}ontrol {\bf 8} (1970), 573--605)}, SIAM J. Control
  Optimization \textbf{14} (1976), no.~2, 387. \MR{0430926 (55 \#3931)}

\bibitem[MM02]{MontiMorbidelli02}
Roberto Monti and Daniele Morbidelli, \emph{Trace theorems for vector fields},
  Math. Z. \textbf{239} (2002), no.~4, 747--776. \MR{1902060 (2003c:46046)}

\bibitem[MM11a]{MontanariMorbidelli11b}
A.~{Montanari} and D.~{Morbidelli}, \emph{{A Frobenius-type theorem for
  singular Lipschitz distributions}}, ArXiv e-prints (2011).

\bibitem[MM11b]{MontanariMorbidelli11c}
\bysame, \emph{{Step-s involutive families of vector fields, their orbits and
  the {P}oincar\'e inequality}}, ArXiv e-prints (2011).

\bibitem[MM12a]{MontanariMorbidelli11a}
\bysame, \emph{{Generalized Jacobi identities and ball-box theorems for
  horizontally regular vector fields}}, ArXiv e-prints (2012).

\bibitem[MM12b]{MM}
\bysame, \emph{{Nonsmooth {H}\"ormander vector fields and their control
  balls}}, Trans. Amer. Math. Soc (to appear). (2012).

\bibitem[Mor00]{Morbidelli98}
Daniele Morbidelli, \emph{Fractional {S}obolev norms and structure of
  {C}arnot-{C}arath\'eodory balls for {H}\"ormander vector fields}, Studia
  Math. \textbf{139} (2000), no.~3, 213--244. \MR{1762582 (2002a:46039)}

\bibitem[NSW85]{NagelSteinWainger}
Alexander Nagel, Elias~M. Stein, and Stephen Wainger, \emph{Balls and metrics
  defined by vector fields. {I}. {B}asic properties}, Acta Math. \textbf{155}
  (1985), no.~1-2, 103--147. \MR{793239 (86k:46049)}

\bibitem[{Pel}10]{Pelletier}
F.~{Pelletier}, \emph{{Integrability of weak distributions on Banach
  manifolds}}, ArXiv e-prints, \url{http://arxiv.org/abs/1012.1950} (2010).

\bibitem[Ram07]{Rampazzo}
Franco Rampazzo, \emph{Frobenius-type theorems for {L}ipschitz distributions},
  J. Differential Equations \textbf{243} (2007), no.~2, 270--300. \MR{2371789
  (2009e:58004)}

\bibitem[Str11]{Street}
Brian Street, \emph{Multi-parameter {C}arnot-{C}arath\'eodory balls and the
  theorem of {F}robenius}, Rev. Mat. Iberoam. \textbf{27} (2011), no.~2,
  645--732. \MR{2848534}

\bibitem[Sus73]{Sussmann}
H{\'e}ctor~J. Sussmann, \emph{Orbits of families of vector fields and
  integrability of distributions}, Trans. Amer. Math. Soc. \textbf{180} (1973),
  171--188.

\bibitem[TW03]{TaoWright03}
Terence Tao and James Wright, \emph{{$L^p$} improving bounds for averages along
  curves}, J. Amer. Math. Soc. \textbf{16} (2003), 605--638 (electronic).

\bibitem[Vit12]{Vittone10}
D.~Vittone, \emph{Lipschitz surfaces, perimeter and trace theorems for {BV}
  functions in {C}arnot-{C}arath\'eodory spaces}, Ann. Scuola Norm. Sup. Pisa
  Cl. Sci. (to appear) (2012).

\bibitem[VSCC92]{VaropoulosSaloffCosteCoulhon}
N.~Th. Varopoulos, L.~Saloff-Coste, and T.~Coulhon, \emph{Analysis and geometry
  on groups}, Cambridge Tracts in Mathematics, vol. 100, Cambridge University
  Press, Cambridge, 1992. \MR{1218884 (95f:43008)}

\end{thebibliography}


\normalsize
\bigskip \noindent\sc \small  Annamaria Montanari, Daniele Morbidelli
\\ Dipartimento di Matematica,
Universit\`{a} di Bologna  (Italy)
\\Email: \tt   annamaria.montanari@unibo.it,
daniele.morbidelli@unibo.it

\end{document}